\documentclass[11pt]{amsart}
\usepackage{amsmath,amsfonts,amsthm,amssymb,amscd, verbatim, graphicx,color}
\usepackage[margin=2cm]{geometry}
\usepackage{hyperref}

\newcommand{\cycletype}[1]{\medmuskip=0mu
\thinmuskip=0mu
\thickmuskip=0mu 
\nulldelimiterspace=-1pt
\scriptspace=0pt
#1}

\renewcommand{\Gamma}{\varGamma}
\renewcommand{\epsilon}{\varepsilon}

\renewcommand{\leq}{\leqslant}
\renewcommand{\geq}{\geqslant}

\newcommand{\C}{\mathcal{C} }

\newcommand{\A}{\mathcal{A} }

\newcommand{\Z}{\mathbb{Z} }

\newtheorem{prop}{Proposition}[section]
\newtheorem{thm}[prop]{Theorem}

\newtheorem{cor}[prop]{Corollary}
\newtheorem{lem}[prop]{Lemma}

\theoremstyle{definition}

\numberwithin{equation}{section}
\begin{document}

\title{Abelian covers of alternating groups}

\author{Daniel Barrantes}
\address{Escuela de Matem\'aticas, Universidad de Costa Rica,  2060 San Jos\'e, Costa Rica.}
\email{daniel.barrantesgarbanzo@ucr.ac.cr}

\author{Nick Gill}\thanks{}
\address{Department of Mathematics,
University of South Wales,
Treforest, CF37 1DL, U.K.
}
\email{nicholas.gill@southwales.ac.uk}

\author{Jerem\'ias Ram\'irez}
\address{Escuela de Matem\'aticas, Universidad Nacional, Heredia, Costa Rica.}
\email{jeremias.ramirez.jimenez@una.cr}

\begin{abstract}
Let $G=A_n$, a finite alternating group. We study the commuting graph of $G$ and establish, for all possible values of $n$ barring $13, 14, 17$ and $19$, whether or not the independence number is equal to the clique-covering number.
\end{abstract}

\maketitle

\section{Introduction}

Throughout this paper $G$ is a finite group.

\subsection{Definitions and main result}\label{s: definitions}

Let $\Omega$ be a subset of the group $G$. A {\it non-commuting subset} of $\Omega$ is a subset $N$ of $\Omega$ such that
\[
x,y \in N \, \, \Longrightarrow xy\neq yx.
 \] 
 We define $\delta(\Omega)$ to be the maximum possible cardinality of a non-commuting subset of $\Omega$. A non-commuting subset that has cardinality $\delta(\Omega)$ will be called a {\it maximal non-commuting subset} of $\Omega$.
 
 An {\it abelian cover} of $\Omega$ is a set $\C$ of abelian subgroups of $G$ whose union contains $\Omega$. We define $\Delta(\Omega)$ to be the minimum possible cardinality of an abelian cover of $\Omega$. An abelian cover that has cardinality $\Delta(\Omega)$ will be called a {\it minimal abelian cover} of $\Omega$.  
    
 A simple application of the pigeon-hole principle yields the following fact:
  
\begin{lem}\label{l: basic inequality}
For any finite group $G$ and a subset $\Omega\subseteq G$, $\delta(\Omega) \leq \Delta(\Omega)$.
\end{lem}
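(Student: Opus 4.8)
The plan is to produce an explicit injection from a maximal non-commuting subset of $\Omega$ into a minimal abelian cover of $\Omega$; the inequality then follows by comparing cardinalities. Concretely, I would fix a non-commuting subset $N\subseteq\Omega$ with $|N|=\delta(\Omega)$ and an abelian cover $\C$ of $\Omega$ with $|\C|=\Delta(\Omega)$. Since the union of the members of $\C$ contains $\Omega$, and hence contains $N$, every element $x\in N$ lies in at least one abelian subgroup belonging to $\C$; choosing one such subgroup for each $x$ defines a function $f\colon N\to\C$.

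The key step is to check that $f$ is injective. Suppose $x,y\in N$ with $f(x)=f(y)=A$ for some $A\in\C$. Then $x$ and $y$ both lie in $A$, which is abelian, so $xy=yx$. Since $N$ is a non-commuting subset, this forces $x=y$. Therefore $f$ is injective, and consequently $\delta(\Omega)=|N|=|f(N)|\le|\C|=\Delta(\Omega)$, as required.

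There is no real obstacle here: the only ingredient is the pigeon-hole observation that two distinct elements of $N$ cannot be covered by the same abelian subgroup, and the argument is uniform in $G$ and $\Omega$ (in particular it does not use finiteness of $G$, though that hypothesis is harmless). I would present the proof in essentially the two short paragraphs above.
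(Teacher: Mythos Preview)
Your argument is correct and is precisely the pigeon-hole argument the paper alludes to (the paper does not spell out a proof beyond that remark). The injection $f\colon N\to\C$ you construct is exactly the content of the pigeon-hole observation, so there is nothing to add.
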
 
   
In this paper we study the case where $G$ is a finite alternating group; we are interested in ascertaining when $\delta(G)=\Delta(G)$. Our main result gives almost complete information.

\begin{thm}\label{t: alternating}
Let $A_n$ be the alternating group on $n$ letters, with $n$ a positive integer. 
\begin{enumerate}
\item If $n\leq 11$ or $n=15$, then $\delta(A_n)=\Delta(A_n)$;
\item If $n=12, 16$ or $18$ or $n\geq 20$, then $\delta(A_n)\neq\Delta(A_n)$.
\end{enumerate} 
\end{thm}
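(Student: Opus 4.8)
The plan is to attack the two parts by rather different means, exploiting Lemma 1.1 throughout. For part (1), the inequality $\delta(A_n)\leq\Delta(A_n)$ is automatic, so the work is to produce, for each listed $n$, an explicit abelian cover whose size equals the size of some explicit non-commuting set. For the very small cases ($n\leq 11$) I would simply exhibit the relevant conjugacy-class data — the set of maximal cyclic (or maximal abelian) subgroups of $A_n$ — and either run a direct combinatorial/computer search or argue by hand that a non-commuting set meeting every maximal abelian subgroup can be chosen. The case $n=15$ is the interesting one in this part: here I expect one uses the structure of elements of order $15$ (a $3$-cycle times a $5$-cycle on disjoint supports, plus $15$-cycles), noting that the relevant maximal abelian subgroups are large cyclic groups $C_{15}$, $C_{7}\times$(stuff), etc., and that $15 = 3+5+7$ or a similar partition lets one build a non-commuting set of exactly the right size. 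In all of part (1) the content is a matching argument: take a minimal abelian cover $\C$, and greedily pick one element from each member of $\C$ that fails to commute with the previously chosen ones; this works provided the cover is "efficient" in a way that has to be verified partition-by-partition.

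For part (2), the strategy is the opposite: show $\delta(A_n)<\Delta(A_n)$ by bounding $\delta$ from above and $\Delta$ from below, or more cleverly by a counting/parity obstruction. The natural tool is to restrict attention to a well-chosen subset $\Omega\subseteq A_n$ — typically the union of all elements of a fixed cycle type, or the set of elements whose support has a fixed size — for which $\delta(\Omega)$ and $\Delta(\Omega)$ are tractable, and then use monotonicity-type facts ($\delta(\Omega)\leq\delta(A_n)$, and a corresponding bound for $\Delta$). For $n=12$ I would look at elements of order related to the partition $12 = 3+4+5$ (or $12 = 5+7$, $3+3+3+3$, etc.): the point is that a minimal abelian cover must use distinct abelian subgroups to cover classes of elements that individually commute with "too few" others, forcing $\Delta$ to exceed the largest non-commuting set one can actually realize. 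For $n\geq 20$ I expect an asymptotic argument: the number of maximal abelian subgroups needed grows like (roughly) the number of partition-types of $n$ into distinct prime-power parts, while the largest non-commuting set is constrained by how many pairwise non-commuting elements can simultaneously have disjoint or interlocking supports in $n$ letters — and for $n$ large these two quantities genuinely separate. The sporadic cases $12, 16, 18$ are then handled by the same mechanism made explicit, with $16$ and $18$ presumably needing a small finite check (which is also why $13, 14, 17, 19$ are excluded — there the check is inconclusive or the two quantities are too close to separate by the available crude bounds).

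The key structural input I would isolate as a lemma is a formula, or at least sharp two-sided bounds, for $\Delta(A_n)$ in terms of the maximal abelian subgroups of $A_n$: concretely, $\Delta(A_n)$ equals the minimum number of maximal abelian subgroups covering $A_n$, and each maximal abelian subgroup of $A_n$ either is cyclic of order equal to an lcm of distinct-part cycle lengths, or is an elementary abelian $2$- or $3$-group (or a product with a small symmetric-group-fixing factor) coming from a cycle type with repeated parts. Classifying these — and counting how many are needed — is where the alternating-group combinatorics lives, and it is the same data that feeds the lower bound for $\Delta$ in part (2). Correspondingly, the main obstacle I anticipate is part (2) for general large $n$: proving a clean lower bound $\Delta(A_n)\geq\delta(A_n)+1$ requires showing that no non-commuting set can be as large as the forced number of covering subgroups, and this likely needs a careful injection/double-counting argument (each element of a non-commuting set lies in its own abelian subgroup, but the covering subgroups of repeated-part type are "wasteful" because a single $C_2\times\cdots\times C_2$ contains many pairwise-commuting elements that a non-commuting set can only draw on once) — making the inequality strict is a genuine extremal-combinatorics problem rather than a routine estimate.
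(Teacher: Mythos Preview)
Your proposal has genuine gaps in both parts, and in part (2) the underlying mechanism you describe is not the one that works.

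\textbf{Part (1).} The missing structural idea is the abelian-centralizer decomposition. The paper defines $Y_a(G)$ to be the union of all abelian centralizers $C_G(g)$ and $Y_b(G)=G\setminus Y_a(G)$, and shows (Lemmas 2.3--2.5) that $\delta(G)=\Delta(G)$ if and only if $\delta(Y_b)=\Delta(Y_b)$. For $n\leq 7$ and $n=10$ one checks that $Y_b(A_n)=\varnothing$, so equality is immediate; for $n=8,9,11$ the set $Y_b$ is a \emph{single} conjugacy class (type $4\text{-}4$ or $4\text{-}4\text{-}3$), whose commuting graph breaks into small connected components sitting inside $S_2\wr K_4$ (times a $3$-cycle), and each component is handled by a $3$-element non-commuting set and a $3$-group cover. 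For $n=15$ the set $Y_b$ consists of the four classes $7\text{-}4\text{-}4$, $6\text{-}6\text{-}3$, $6\text{-}4\text{-}2\text{-}2$, $6\text{-}3\text{-}3\text{-}2$, each treated the same way. Your greedy matching over all maximal abelian subgroups, and your guess at order-$15$ elements and the partition $3+5+7$, are not the relevant data; without the $Y_a/Y_b$ reduction the problem is intractable by hand.

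\textbf{Part (2).} Your plan --- upper-bound $\delta$, lower-bound $\Delta$, and for large $n$ separate them asymptotically --- does not work: Brown's results (cited in the paper) show that $\Delta(S_n)/\delta(S_n)$ takes values arbitrarily close to $1$, so there is no asymptotic gap to exploit, and the same obstruction is expected for $A_n$. The paper's argument is instead a \emph{local forcing contradiction}. Assume $\delta(A_n)=\Delta(A_n)$, take a minimal cover $\mathcal{A}\supseteq\mathcal{C}_a$ and a maximal non-commuting set $E\supseteq N_a$, and restrict to the set $Cl_\gamma$ of elements of cycle type $2\text{-}3\text{-}(n-8)$ (for $n$ even; type $2\text{-}3\text{-}8\text{-}(n-16)$ for $n$ odd) with a \emph{fixed} long cycle $\gamma$. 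A counting argument shows $|Cl_\gamma|=1120$, that any abelian subgroup meets $Cl_\gamma$ in at most $4$ elements, and that a specific family $\mathcal{B}$ of $280$ abelian groups covers $Cl_\gamma$; together with Lemmas 3.2--3.3 this pins down $\mathcal{A}_\gamma=\mathcal{B}$ exactly. One then writes down nine explicit members $A_1,\dots,A_9\in\mathcal{B}$ and shows that the unique $E$-representative in each is forced (up to irrelevant powers) by the non-commuting condition, until at $A_9$ every possible choice commutes with an earlier representative. This is a finite combinatorial chase on eight letters, independent of $n$, and it is what makes the argument uniform for all $n\geq 20$ (and $n=12,16,18$); the ``sporadic finite checks'' you anticipate for $16,18$ are not needed.
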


\subsection{The commuting graph}

The definitions just given can be recast in terms of a particular graph, as follows.

Let $G$ be a group and $\Omega \subseteq G$. Then the {\it commuting graph} of $\Omega$, denoted $\Gamma (\Omega)$, is the graph whose vertex set is $\Omega$ and with vertices connected if and only if they commute.

Now a cover of $\Omega$ by abelian subgroups of $G$ corresponds to a cover of $\Gamma( \Omega)$ by cliques, and so $\Delta(\Omega)$ is the {\it clique-covering number} of $\Gamma(\Omega)$. Similarly, a non-commuting subset of $\Omega$ is an independent set in $\Gamma (\Omega)$, and $\delta(\Omega)$ is the {\it independence number} of the graph.


\subsection{Notation}

Our notation is entirely standard. The {\it support} of a permutation $g \in S_n$ is the set $S$ of those elements in $\{1,2,3,\ldots, n\}$ satisfying $g(j)\neq j$. Two permutations $g,h \in S_n$ are {\it disjoint} if they have disjoint supports. Let $n_1,n_2, \ldots , n_r$ be positive integers such that $n_1+n_2+\cdots + n_r\leq n $. We say that a permutation $g \in S_n$ has cycle type $\cycletype{n_1-n_2-\cdots -n_r}$ if $g$ can be written as a product of disjoint cycles $g=g_1g_2\cdots g_r$, where the cycle $g_i$ is an $n_i$-cycle.

For $g\in G$, we write $Cl_G(g)$ for the conjugacy class of $g$ in $G$ (we will omit the subindex $G$, when the group $G$ is clear from the context.) Also we will write $C_G (g)$ for the centralizer of $g$ in $G$.



\subsection{Context}

The following result of Brown is a starting point for our research \cite{brown1, brown2}.

\begin{thm}\label{t: brown}
Let $S_n$ be the symmetric group on $n$ letters, with $n$ a positive integer. 
\begin{enumerate}
\item If $n\leq 7, n=9$ or $n=11$, then $\delta(S_n)=\Delta(S_n)$;
\item If $n=8$, $n=13$ or $n\geq 15$, then $\delta(S_n)\neq\Delta(S_n)$.
\end{enumerate} 
\end{thm}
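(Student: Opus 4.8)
The plan rests on Lemma~\ref{l: basic inequality}: since $\delta(\Omega)\le\Delta(\Omega)$ always, proving part~(1) for a given $n$ reduces to exhibiting a non-commuting subset $N\subseteq S_n$ and an abelian cover $\C$ of $S_n$ with $|N|=|\C|$, whereupon both are automatically optimal and equal; and proving part~(2) reduces to separating an explicit upper bound $U(n)$ for $\delta(S_n)$ from an explicit lower bound $L(n)$ for $\Delta(S_n)$. Equivalently, one is asking when the independence number and the clique-covering number of the commuting graph $\Gamma(S_n)$ coincide; since this graph is not perfect for large $n$, even the equality cases require a genuine argument rather than an appeal to perfect-graph theory.

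For part~(1) the approach is constructive, and for the larger values $n=9,11$ it is most readily carried out by machine computation. The organising heuristic is that the classes which are costly to cover are the large ones whose elements are close to regular: if $g\in S_n$ has an essentially cyclic centraliser --- for instance $g$ is an $n$-cycle, or has a cycle type with few, pairwise coprime parts --- then every abelian subgroup containing $g$ lies inside $C_{S_n}(g)$, which is (meta)cyclic, so it meets $Cl(g)$ in only a few elements and the induced subgraph of $\Gamma(S_n)$ on $Cl(g)$ is close to a disjoint union of small cliques. One therefore takes $N$ to consist of one generator from each distinct such maximal cyclic subgroup (distinct such subgroups yield non-commuting generators), takes $\C$ to be those same cyclic subgroups together with a controlled number of further abelian subgroups chosen to absorb the remaining, small-support elements, and checks that $|N|=|\C|$.

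For part~(2) the two bounds are produced independently. For the lower bound on $\Delta(S_n)$: if $C$ is the class of $n$-cycles, then any abelian subgroup meeting $C$ lies inside the cyclic group generated by one of its elements and so contains at most $\phi(n)$ $n$-cycles; since $|C|=(n-1)!$ this already gives $\Delta(S_n)\ge(n-1)!/\phi(n)$. One sharpens this by peeling: the cyclic subgroups forced in by $C$ consist of elements all of whose cycles have equal length, so they cover no element of type $(n-1,1)$, none of type $(a,b)$ with $a\ne b$, and so on, so that each successive near-regular cycle type forces genuinely new abelian subgroups; summing these deficiencies yields $L(n)$. For the upper bound on $\delta(S_n)$: for any non-commuting set $N$ and any conjugacy class $C$, the intersection $N\cap C$ is a non-commuting subset of $C$, so $|N\cap C|\le\delta(C)$ and hence $\delta(S_n)\le\sum_C\delta(C)$; this is refined using the facts that the supports $\{\supp(g):g\in N\}$ form an intersecting family and that membership of $N$ across distinct cycle types is itself restricted. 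One then checks $U(n)<L(n)$ directly for $n=8$ and $n=13$, and for all $n\ge15$ by an asymptotic comparison backed up by a finite range of explicit checks.

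The hard part will be the precision demanded in part~(2). The leading terms of both bounds --- essentially $(n-1)!/\phi(n)$, coming from the $n$-cycles --- actually coincide, so the whole of $\Delta(S_n)-\delta(S_n)$ lives in the lower-order contributions from the non-regular classes; these must be estimated on both sides carefully and uniformly in $n$, since a crude estimate leaves $U(n)$ and $L(n)$ of comparable size and proves nothing. This is exactly where the imperfection of $\Gamma(S_n)$ is being used, but it has to be exploited for the whole group, not merely on a single witnessing induced subgraph --- for which the independence and clique-covering numbers bound those of $\Gamma(S_n)$ in the same, unhelpful, direction. Finally, because the answer is not monotone in $n$ --- $S_8$ fails, $S_9$ and $S_{11}$ succeed, $S_{13}$ fails again, while $S_{10}$, $S_{12}$, $S_{14}$ remain open --- the small cases admit no soft uniform treatment and must be settled individually, by explicit construction or by computer search.
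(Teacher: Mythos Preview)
The paper does not prove this theorem at all: it is quoted as a known result of Brown, with the citation \cite{brown1, brown2}, and is used purely as context and motivation for the analogous result on alternating groups. So there is no ``paper's own proof'' to compare against directly. That said, the paper explicitly states that its method for $A_n$ ``strongly resembles the method of Brown'', so one can infer Brown's argument from Propositions~\ref{p: alt even} and~\ref{p: alt odd}, and on that basis your plan for part~(2) departs substantially from Brown's and has a genuine gap.

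Your strategy for part~(2) is to produce a global upper bound $U(n)$ for $\delta(S_n)$ and a global lower bound $L(n)$ for $\Delta(S_n)$ and show $U(n)<L(n)$. You yourself observe that the leading terms of both bounds coincide, so the entire separation must come from lower-order terms; but your upper bound $\delta(S_n)\le\sum_C\delta(C)$ is far too slack to survive this --- distinct conjugacy classes contain many commuting pairs, so summing class-by-class massively overcounts, and the vague refinements you mention (intersecting supports, cross-class restrictions) are not nearly sharp enough to close a gap that sits in the subleading terms. There is no reason to believe $U(n)<L(n)$ can be established this way, and you give no mechanism for doing so.

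Brown's actual method, as mirrored in this paper, avoids computing or bounding $\delta$ and $\Delta$ globally. Instead one assumes $\delta=\Delta$, uses Lemmas~\ref{l: get rid of abelian centralizers} and~\ref{l: leftover} to reduce to the non-abelian-centralizer part $Y_b$, isolates a single troublesome conjugacy class (for $S_n$, elements of a fixed type such as $2\text{--}3\text{--}(n-5)$), and then exhibits a finite local configuration of abelian subgroups --- the analogue of the nine groups $A_1,\dots,A_9$ in the proof of Proposition~\ref{p: alt even} --- for which no system of pairwise non-commuting representatives can exist. This is a purely combinatorial contradiction that works uniformly for all large $n$ because the obstruction lives on a fixed set of eight points; it requires no asymptotic estimate and no comparison of global bounds. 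Your plan is missing this localisation idea entirely.
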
 

One can view our main result, Theorem~\ref{t: alternating}, as an analogue of Theorem~\ref{t: brown} for the alternating groups.

The study of groups via their commuting graph goes back many years. Perhaps the most famous result in this line of study concerns groups that are not finite: in 1976, B.\,H.~Neumann answered the following question of Erd\H{o}s from a few years earlier: if all non-commuting
sets in a group~${G}$ are finite, does there exist an upper bound ${n=n(G)}$ for the size of a non-commuting set in~${G}$ (i.e. is the value of $\delta(G)$ finite)? Neumann answered this question affirmatively by showing that if all non-commuting sets in a group~${G}$ are finite, then ${|G:Z(G)|}$ is finite
\cite{neumann}. Pyber subsequently gave a strong upper bound for ${|G:Z(G)|}$, subject to the same condition on ${G}$ (see \cite{pyber}).

The question of whether or not $\delta(G)=\Delta(G)$ has been studied by various authors for various groups $G$ (see, for instance, \cite{aamz, afo, ap, abg}). To our knowledge, Theorem~\ref{t: alternating} is the first result that asserts that $\delta(G)\neq \Delta(G)$ for some finite {\it simple} group $G$.

\subsection{Acknowledgments}

None of the results presented in this paper rely on computer calculations, however we are happy to acknowledge our use of the GAP computer package \cite{GAP}, which was a vital tool in the research process. In this direction, we particularly want to thank A.~Hulpke for help with coding at various stages.

All three authors would also like to thank M.~Josephy for valuable discussions.

\section{Background}\label{s: background}

In this section we record some basic results and definitions that will be needed in the sequel. The lemmas that we will need are little more than observations.

\begin{lem}\label{l: maximal}
There exists an abelian cover of $G$ of size $\Delta(G)$ and consisting entirely of maximal abelian subgroups.
\end{lem}
\begin{proof}
Let $\A$ be an abelian cover of $G$ of size $\Delta(G)$. Now replace every element of $\A$ by a maximal abelian subgroup of $G$ containing it. The resulting cover has the property we seek.
\end{proof}
  
\begin{lem}\label{l: centralizer}  
 Let $g\in G$, and suppose that $C_G(g)$ is abelian. Then
 \begin{enumerate}
\item $C_G(g)$ is a maximal abelian subgroup of $G$ and it is the unique maximal abelian subgroup of $G$ that contains $g$;
 \item There exists an abelian cover of $G$ of size $\Delta(G)$ containing $C_G(g)$.
 \end{enumerate}
\end{lem}
\begin{proof}
Let $A$ be a maximal abelian subgroup of $G$ that contains $C_G(g)$. In particular, $A$ contains $g$ and so, since $A$ is abelian, $A\leq C_G(g)$. Thus $A=C_G(g)$ and (1) is proved.

Let $\A$ be an abelian cover of $G$ of size $\Delta(G)$ and let $X$ be an element of $\A$ that contains $g$. By (1), $X$ is a subgroup of $C_G(g)$ and we can replace $X$ by $C_G(g)$ in $\A$ to obtain the cover that we need.
\end{proof}

Now we need a number of definitions; the ensuing lemmas will highlight their significance. Let $\C_a(G)$ be the set of all the abelian centralizers in $G$, $Y_a(G)$ its union and $Y_b(G)$ the complement of $Y_a(G)$. Hence
\begin{align*}
\C_a(G) &= \{ X\leq G \mid X\textrm{ is abelian, and } X=C_G(g) \textrm{ for some } g\in G\};  \\
Y_a(G) &= \bigcup_{X\in C_a(G)}X; \\
Y_b(G) &= G \setminus Y_a(G).
\end{align*}
We remark that both $Y_a(G)$ and $Y_{b}(G)$ are unions of conjugacy classes of $G$. 

Next, for every element $X\in \C_a(G)$, let $g_X\in G$ be such that $C_G(g_X)=X$. Now define
\[
N_a(G)=\{g_X \mid X\in \C_a(G)\}. 
\]
We caution that the set $N_a(G)$ is {\bf not} uniquely defined, since there may be more than one choice of $g_X$ for any given $X\in \C_a(G)$. In what follows we will refer to `a set $N_a(G)$', by which we will mean a set constructed in the given way. For all of the above definitions -- $\C_a(G), \C_b(G), Y_a(G), Y_b(G), N_a(G)$ -- when the group $G$ is obvious from the context, we may drop the $(G)$ from the name. Thus, for example, we will write $\C_a$ for $\C_a(G)$.

The next two lemmas connect the above definitions; the first is immediate.

\begin{lem}\label{l: Ya}
The set $\C_a(G)$ is an abelian cover of $Y_a(G)$, and the set $N_a(G)$ is a non-commuting subset of $Y_a(G)$. In particular, $\delta(Y_a(G))=\Delta(Y_a(G))$.
\end{lem}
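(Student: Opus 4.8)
The plan is to prove the three assertions of Lemma~\ref{l: Ya} essentially directly from the definitions, using the two preceding lemmas to handle the abelian-cover side and the pigeon-hole inequality (Lemma~\ref{l: basic inequality}) to close the gap at the end.

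First I would check that $\C_a(G)$ is an abelian cover of $Y_a(G)$. By definition $\C_a(G)$ is a set of abelian subgroups of $G$, and its union is exactly $Y_a(G)$, so the containment $Y_a(G)\subseteq\bigcup_{X\in\C_a(G)}X$ holds trivially (with equality). Hence $\Delta(Y_a(G))\leq|\C_a(G)|=|N_a(G)|$, the last equality because $X\mapsto g_X$ is a bijection from $\C_a(G)$ onto $N_a(G)$ (distinct centralizers $X\ne X'$ cannot share a representative $g$, since $C_G(g)$ is a single subgroup).

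Next I would show $N_a(G)$ is a non-commuting subset of $Y_a(G)$. Each $g_X$ lies in $X=C_G(g_X)\in\C_a(G)$, so $N_a(G)\subseteq Y_a(G)$. For the non-commuting property, take distinct $g_X,g_{X'}\in N_a(G)$ with $X\ne X'$, and suppose toward a contradiction that they commute. By Lemma~\ref{l: centralizer}(1), since $C_G(g_X)=X$ is abelian, $X$ is the \emph{unique} maximal abelian subgroup of $G$ containing $g_X$; likewise $X'$ is the unique maximal abelian subgroup containing $g_{X'}$. But if $g_X$ and $g_{X'}$ commute, then $g_{X'}\in C_G(g_X)=X$, so $g_{X'}$ lies in the abelian group $X$, whence some maximal abelian subgroup containing $X$ — which must be $X$ itself, as $X$ is already maximal abelian by Lemma~\ref{l: centralizer}(1) — contains $g_{X'}$; by uniqueness $X=X'$, a contradiction. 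Therefore $\delta(Y_a(G))\geq|N_a(G)|$.

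Combining, $|N_a(G)|\leq\delta(Y_a(G))\leq\Delta(Y_a(G))\leq|\C_a(G)|=|N_a(G)|$, where the middle inequality is Lemma~\ref{l: basic inequality} applied to the subset $Y_a(G)$ of $G$; hence all are equal and in particular $\delta(Y_a(G))=\Delta(Y_a(G))$. I do not anticipate a genuine obstacle here: the only point requiring a little care is the non-commuting argument, where one must invoke the uniqueness clause of Lemma~\ref{l: centralizer}(1) rather than argue naively, and the observation that $|\C_a(G)|=|N_a(G)|$, which needs the remark that a single element determines its centralizer.
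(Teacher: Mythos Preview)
Your argument is correct and is precisely the verification the paper has in mind; the paper simply declares the lemma ``immediate'' and omits any proof, so your write-up is a fully spelled-out version of that same direct check from the definitions. One minor simplification: in the non-commuting step you can avoid invoking maximality entirely, since if $g_X$ and $g_{X'}$ commute then $g_{X'}\in C_G(g_X)=X$, whence $X\subseteq C_G(g_{X'})=X'$, and by symmetry $X=X'$.
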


\begin{lem}\label{l: get rid of abelian centralizers}
\begin{enumerate}
 \item There is an abelian cover of $G$ of size $\Delta(G)$ for which $\C_a(G)$ is a subset.
 \item Let $X$ be a set $N_a(G)$. There is a non-commuting subset of $G$ of size $\delta(G)$ for which $X$ is a subset.
\end{enumerate}
\end{lem}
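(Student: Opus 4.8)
The plan is to prove the two parts of Lemma~\ref{l: get rid of abelian centralizers} by starting from an optimal object (an abelian cover of size $\Delta(G)$, respectively a non-commuting subset of size $\delta(G)$) and surgically modifying it to absorb the required pieces, using Lemma~\ref{l: centralizer} as the main engine. For part~(1): let $\A$ be an abelian cover of $G$ of size $\Delta(G)$. The set $\C_a(G)$ is finite, so enumerate its elements as $X_1,\dots,X_m$, with $X_i=C_G(g_i)$ for suitable $g_i$. I would process them one at a time: at stage $i$, Lemma~\ref{l: centralizer}(1) tells us that $X_i=C_G(g_i)$ is the \emph{unique} maximal abelian subgroup of $G$ containing $g_i$, so the (at least one) member of the current cover containing $g_i$ is a subgroup of $X_i$; replace it by $X_i$. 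This keeps the cover a cover, does not increase its size, and once $X_i$ has been inserted it is never removed at a later stage (a later replacement only removes a set containing some $g_j$, and since $X_i$ already contains $g_j$ only if $X_i=X_j$, which does not happen for $j\neq i$, the set $X_i$ survives). After $m$ steps we have an abelian cover of size $\Delta(G)$ containing every $X_i$, \ie containing $\C_a(G)$ as a subset; since a cover of size $\Delta(G)$ is minimal, its size is exactly $\Delta(G)$.

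For part~(2): let $X=N_a(G)=\{g_{X}\mid X\in\C_a(G)\}$ be the chosen transversal, and let $M$ be a non-commuting subset of $G$ of size $\delta(G)$. By Lemma~\ref{l: Ya} the set $X$ is itself a non-commuting subset, so the only issue is to merge $M$ with $X$ without losing the non-commuting property or the size. The key observation is that each $g_{X}\in X$ has abelian centralizer $C_G(g_X)=X$; consequently, for any $h\in G$, if $h$ commutes with $g_X$ then $h\in X=C_G(g_X)$, and in particular $h$ has abelian centralizer containing $g_X$, forcing $C_G(h)=C_G(g_X)$ by Lemma~\ref{l: centralizer}(1) (both equal the unique maximal abelian subgroup containing $g_X$). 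Thus within $M$ there is at most one element commuting with a given $g_X$, and two distinct elements of $X$ have distinct centralizers, hence no element of $M$ commutes with two different members of $X$. Now form $M':=X\cup\{h\in M\mid h\text{ commutes with no element of }X\}$: this is a non-commuting subset containing $X$, and I claim $|M'|\geq|M|$. Indeed, define a map from $M\setminus M'$ (the elements of $M$ that commute with some member of $X$) to $X$ sending such an $h$ to the unique $g_X$ it commutes with; by the previous sentence this map is well defined, and it is injective because if $h_1,h_2\in M$ both commute with the same $g_X$ then, as shown, $C_G(h_1)=C_G(h_2)=X$ is abelian and contains both, so $h_1,h_2$ commute, whence $h_1=h_2$ since $M$ is non-commuting. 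Therefore $|M\setminus M'|\leq|X\cap M'|=|X|$, but also these $g_X$'s that get hit are precisely the members of $X$ that were \emph{not} already in $M$ — more carefully, one checks $|M'|=|X|+|\{h\in M: h\text{ commutes with nothing in }X\}|\geq |M|$, so $|M'|\geq\delta(G)$, forcing $|M'|=\delta(G)$ by maximality of $\delta(G)$.

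The main obstacle, and the place to be careful, is the bookkeeping in part~(2): one must verify that enlarging $M$ to contain all of $X$ loses at most as many elements as it gains. The clean way to see this is exactly the injective map above — each element thrown out of $M$ is ``charged'' to a distinct element of $X$, and every element of $X$ can be charged at most once, using crucially that an abelian centralizer $C_G(g_X)$ pins down $g_X$'s conjugacy behaviour via Lemma~\ref{l: centralizer}(1). Part~(1) is genuinely routine once one notices that the replacements are non-interfering, again by the uniqueness clause of Lemma~\ref{l: centralizer}(1). In both parts the final equality ($=\Delta(G)$, respectively $=\delta(G)$) is immediate from the definitions of these quantities as a minimum and a maximum, so no extra work is needed there.
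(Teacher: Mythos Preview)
Part~(1) is correct and amounts to iterating Lemma~\ref{l: centralizer}(2), which is exactly what the paper does.

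Part~(2) contains a false step. You assert that if $h$ commutes with some $g_X$ then $C_G(h)$ is abelian and equals $C_G(g_X)$. This fails in general: take $h=1$, or indeed any element of $C_G(g_X)$ whose own centralizer is nonabelian. All that follows from $h$ commuting with $g_X$ is $h\in C_G(g_X)$; nothing forces $C_G(h)$ to be abelian. Your consequent claim that no element of $M$ commutes with two distinct members of $X$ is therefore unjustified and can fail. The repair is immediate, however: define the map $M\setminus M'\to X$ by \emph{choosing} any $g_X$ with which $h$ commutes. Injectivity then follows directly, without the false detour, since $h_1,h_2\in C_G(g_X)$ and $C_G(g_X)$ abelian give $h_1h_2=h_2h_1$, whence $h_1=h_2$ as $M$ is non-commuting. (You essentially wrote this, but routed it through the incorrect equality $C_G(h_i)=X$.) With this fix your counting goes through.

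For comparison, the paper's argument for (2) is shorter and avoids the injection entirely: starting from a maximal non-commuting set $N$, it deletes $V:=N\cap Y_a$ wholesale and inserts $X$ in its place. Lemma~\ref{l: Ya} gives $|V|\leq\delta(Y_a)=|X|$, so the size does not drop, and the result is still non-commuting because each $g_X$ has centralizer contained in $Y_a$ and hence commutes with nothing left in $N\setminus V\subseteq Y_b$. Your surgery is more frugal about what it throws away from $M$, but the paper's use of the $Y_a/Y_b$ partition sidesteps all the bookkeeping.
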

\begin{proof}
The first statement is an immediate consequence of Lemma~\ref{l: centralizer}. For the second, suppose that $N$ is a non-commuting subset of $G$ and let $V=N\cap Y_a$. Clearly $|V|\leq \delta(Y_a(G))=|X|$, since $Y_a(G)$ is covered by $\delta(Y_a(G))$ abelian subgroups (Lemma~\ref{l: Ya}). Thus if we remove $V$ from $N$ and put $X$ in its place, then $N$ will not have diminished in size. What is more, by construction, $N$ is still non-commuting (since any element of $X$ does not commute with any element outside $Y_a$). We are done.
\end{proof}

\begin{lem}\label{l: leftover}
A group $G$ has an abelian cover of size $\delta(G)$ if and only if $\Delta(Y_{b}(G))=\delta(Y_{b}(G))$.
\end{lem}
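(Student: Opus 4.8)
The plan is to show that both invariants $\delta$ and $\Delta$ split additively across the partition $G = Y_a(G)\sqcup Y_b(G)$, with the $Y_a(G)$-part contributing the same quantity $k:=|\C_a(G)|$ in each case; the lemma then falls out of comparing the two resulting identities.

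The structural input I would isolate first is the following elementary observation: if $X=C_G(g_X)$ is an abelian centralizer and $h\in G$ commutes with $g_X$, then $h\in C_G(g_X)=X\subseteq Y_a(G)$. Equivalently, no element of $Y_b(G)$ commutes with any element of a fixed set $N_a(G)$, and no member of $\C_a(G)$ meets $Y_b(G)$ at all (since $\bigcup_{X\in\C_a(G)}X=Y_a(G)$). Together with Lemma~\ref{l: Ya}, which supplies $\delta(Y_a(G))=\Delta(Y_a(G))=k$, this is essentially all the structural information that is needed.

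Next I would establish $\delta(G)=\delta(Y_b(G))+k$. For the bound $\delta(G)\geq\delta(Y_b(G))+k$, take a non-commuting subset $M\subseteq Y_b(G)$ with $|M|=\delta(Y_b(G))$ and a set $N_a(G)$; by the observation above $M\cup N_a(G)$ is non-commuting, and the two pieces are disjoint, so it has size $\delta(Y_b(G))+k$. For the reverse bound, apply Lemma~\ref{l: get rid of abelian centralizers}(2) to obtain a non-commuting subset $N\subseteq G$ with $|N|=\delta(G)$ and $N_a(G)\subseteq N$; any $h\in N\cap Y_a(G)$ lies in some abelian $X\in\C_a(G)$, hence commutes with $g_X\in N_a(G)\subseteq N$, forcing $h=g_X$. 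Thus $N\cap Y_a(G)=N_a(G)$ and $N\setminus N_a(G)$ is a non-commuting subset of $Y_b(G)$ of size $\delta(G)-k$. An entirely parallel argument, using Lemma~\ref{l: get rid of abelian centralizers}(1) in place of (2) and the fact that members of $\C_a(G)$ cover nothing outside $Y_a(G)$, gives $\Delta(G)=\Delta(Y_b(G))+k$.

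Subtracting the two identities yields $\Delta(G)-\delta(G)=\Delta(Y_b(G))-\delta(Y_b(G))$. By Lemma~\ref{l: basic inequality}, applied both to $G$ and to $\Omega=Y_b(G)$, each side is non-negative, so one vanishes precisely when the other does; and $G$ has an abelian cover of size $\delta(G)$ if and only if $\Delta(G)\leq\delta(G)$, which by Lemma~\ref{l: basic inequality} is the same as $\Delta(G)=\delta(G)$. I do not anticipate a real obstacle here, as the argument is mostly bookkeeping, but the step demanding the most care is the verification that the $Y_a(G)$-part of an optimal non-commuting set (respectively, of an optimal abelian cover) is forced to be exactly $N_a(G)$ (respectively, exactly $\C_a(G)$), since this is what turns the expected inequalities into genuine equalities.
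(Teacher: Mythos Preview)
Your argument is correct and uses the same ingredients as the paper's proof (Lemmas~\ref{l: Ya} and~\ref{l: get rid of abelian centralizers} together with the observation that no element of $Y_b(G)$ commutes with any $g_X$). The only organizational difference is that you first isolate the two additive identities $\delta(G)=\delta(Y_b(G))+|\C_a(G)|$ and $\Delta(G)=\Delta(Y_b(G))+|\C_a(G)|$ and then subtract, whereas the paper argues each implication directly; the content is the same.
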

\begin{proof}
Suppose, first, that $G$ has an abelian cover $\A$ of size $\delta(G)$. By Lemma~\ref{l: get rid of abelian centralizers} we can assume that $\C_a$ is a subset of $\A$. Let $N$ be a non-commuting subset of $G$ of size $\delta(G)$; then there is a subset, $N_a$, of $N$ of size $|\C_a|$ whose elements lie inside elements of $\C_a$. 

Now let $N_b=N\setminus N_a$, and $\C_b=\A\setminus \C_a$. Then $N_b$ is a non-commuting subset of $Y_{b}$, $\C_b$ is an abelian cover of $Y_{b}$, and $|N_b|=|\C_b|$, as required.

For the converse, suppose that $\Delta(Y_{b})=\delta(Y_{b})$, let $N_b$ be a maximal non-commuting subset of $Y_b$, and let $\C_b$ be a minimal abelian cover of $Y_b$. Now observe that $N_a\cup N_b$ is a maximal non-commuting subset of $Y_b$, and that $\C_a\cup \C_b$ is a minimal abelian cover of $Y_b$.
\end{proof}

We close by making an elementary remark. Suppose that $G$ is a group for which $\delta(G)=\Delta(G)$, that $N$ is a non-commuting subset of $G$ of size $\delta(G)$, and that $\C$ is an abelian cover of $G$ of size $\Delta(G)$. Then every element of $\C$ contains a unique element of $N$ (indeed the same is true for $G$ replaced in our suppositions by any subset $\Omega$ of $G$).

\section{Results on alternating groups}\label{s: symmetric}

We will make heavy use of the following elementary lemma, a proof of which can be found in \cite{brown2}.

\begin{lem}\label{l: brown}
Let $\sigma$ be a product of nontrivial disjoint cycles $\sigma_1 ,..., \sigma_k $ no two of which have the same length. Then every element of $S_n$ which commutes with $\sigma$ is a product $\tau \rho $, where $\tau$ is a product of powers of the cycles $\sigma_i$, $1 \leq i \leq k$ and $\rho$ and $\sigma$ have disjoint support.
\end{lem}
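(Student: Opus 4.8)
The plan is to prove Lemma~\ref{l: brown} by a direct conjugation argument, exploiting the fact that the cycles $\sigma_1,\dots,\sigma_k$ all have distinct lengths. First I would set up notation: let $S_i=\supp(\sigma_i)$ be the support of $\sigma_i$, so the $S_i$ are pairwise disjoint, and let $S_0=\{1,\dots,n\}\setminus\bigcup_{i=1}^k S_i$ be the set of points fixed by $\sigma$. The key observation is the standard conjugacy formula: if $g\in S_n$, then $g\sigma g^{-1}$ is again a product of disjoint cycles, namely the cycles obtained by applying $g$ to the entries of each $\sigma_i$; in particular $g\sigma_i g^{-1}$ is a cycle of the same length as $\sigma_i$ with support $g(S_i)$.

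The heart of the argument is then: suppose $g\in C_{S_n}(\sigma)$, so $g\sigma g^{-1}=\sigma$. Comparing the two sides as products of disjoint cycles and using that cycle decompositions are unique, $g$ must permute the set of cycles $\{\sigma_1,\dots,\sigma_k\}$ by conjugation. But conjugation preserves cycle length, and by hypothesis no two of the $\sigma_i$ share a length, so $g$ must fix each $\sigma_i$ individually, i.e.\ $g\sigma_i g^{-1}=\sigma_i$ for every $i$. Consequently $g(S_i)=S_i$ for all $i$, and hence $g(S_0)=S_0$ as well. This lets me factor $g=\tau\rho$ where $\tau$ is the restriction of $g$ to $\bigcup S_i$ (extended by the identity on $S_0$) and $\rho$ is the restriction of $g$ to $S_0$ (extended by the identity elsewhere); these commute and have disjoint support, and $\rho$ visibly has support disjoint from $\sigma$.

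It remains to identify $\tau$ more precisely as a product of powers of the $\sigma_i$. Since $\tau$ preserves each $S_i$ and commutes with $\sigma_i$ on $S_i$, I would reduce to the single-cycle case: a permutation of a finite set that commutes with an $\ell$-cycle $\sigma_i$ on that set is itself a power of $\sigma_i$. This is the classical computation of the centralizer of a single cycle inside the symmetric group on its support, which has order exactly $\ell$ and is generated by $\sigma_i$; one proves it by noting that such a permutation is determined by the image of a single point of $S_i$, and that image can be any of the $\ell$ points, each realized by a distinct power of $\sigma_i$. Writing $\tau=\prod_i \sigma_i^{a_i}$ then gives the desired form $g=\tau\rho$.

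The only real obstacle is bookkeeping around the fixed points: one must be careful to distinguish the support of $\sigma$ from $\{1,\dots,n\}$, so that the ``leftover'' permutation $\rho$ is correctly defined on $S_0$ and the factorization $g=\tau\rho$ is into genuinely disjoint permutations of $S_n$. Everything else is the standard theory of cycle decomposition and centralizers of cycles; the distinct-lengths hypothesis is exactly what rules out $g$ permuting cycles among themselves, which is the one place the argument could otherwise fail. Since this is stated as an elementary lemma with a reference to \cite{brown2}, I would keep the write-up brief, citing the single-cycle centralizer computation rather than reproving it in full.
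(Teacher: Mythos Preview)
Your argument is correct and is precisely the standard proof of this fact: use uniqueness of cycle decomposition plus the distinct-lengths hypothesis to see that any $g\in C_{S_n}(\sigma)$ stabilises each support $S_i$ setwise, then invoke that the centraliser of an $\ell$-cycle in $\Sym(S_i)$ is generated by that cycle. Note, however, that the paper does not give its own proof of this lemma at all; it simply cites \cite{brown2}, so there is nothing to compare your write-up against beyond observing that what you have written is exactly the kind of brief, elementary argument one would expect to find there.
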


The treatment that follows is split into three cases -- when $n$ is even, when $n$ is odd, and when $n$ is small. The strategy in the first two cases is identical, and strongly resembles the method of Brown \cite{brown2}. However some of the details differ and so, for clarity, the cases are written separately.

\subsection{\texorpdfstring{$n$}{n} is even}

We assume here that $n$ is even, and let 
\[\sigma=(4,5)(6,7,8)(9,10,\ldots , n)\in A_n.\]
We are interested in the set $Cl_{A_n}(\sigma)$,  the conjugacy class of $\sigma$ in $A_n$ and we note first that an element of $Cl_{A_n}(\sigma)$ does not have an abelian centralizer in $A_n$.

\begin{lem}\label{l: alt even}
Suppose that $n=12$ or $n$ is even and $n\geq 16$, $g \in Cl_{A_n}(\sigma)$, $K$ is a maximal abelian subgroup of $A_n$ containing $g$, and $h$ is an element of $K$. Then one of the following occurs:
\begin{enumerate}
\item $h$ lies in an abelian centralizer.
\item $h$ has cycle type $\cycletype{2-3-\underbrace{d-d-\cdots -d}_k}$ where $1\leq d,k\leq n-8$ and $dk=n-8$.
\end{enumerate}
\end{lem}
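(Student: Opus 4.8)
The plan is to analyze the structure of an element $h$ in a maximal abelian subgroup $K$ containing $g \in Cl_{A_n}(\sigma)$, where $\sigma = (4,5)(6,7,8)(9,10,\ldots,n)$. Since $K$ is abelian, every element of $K$ commutes with $g$, so the first step is to apply Lemma~\ref{l: brown} to the element $g$. Because $g$ has cycle type $\cycletype{2-3-(n-8)}$ with three cycles of pairwise distinct lengths (here we use $n \geq 12$ so that $2, 3, n-8$ are distinct, and $n-8 \neq 2, 3$), Lemma~\ref{l: brown} tells us that any $h$ commuting with $g$ factors as $h = \tau\rho$, where $\tau$ is a product of powers of the three cycles of $g$ and $\rho$ is supported away from $\supp(g)$, i.e. $\rho$ moves only points among the eight points $\{1,2,\ldots,8\} \setminus \supp(g)$-complement — more precisely $\rho$ is supported on the $8$ points not moved by $g$ (note $|\supp(g)| = n - 8$ wait, $2 + 3 + (n-8) = n-3$, so actually $\supp(g)$ has size $n-3$ and its complement has size $3$). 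I need to recompute: the three points fixed by $g$ among $1,\ldots,n$ are $\{1,2,3\}$, so $\rho \in \Sym(\{1,2,3\})$, and since $h \in A_n$ and $\tau \in A_n$ forces $\rho \in A_3$, giving $\rho \in \{e, (1,2,3), (1,3,2)\}$. So $h = \tau \rho$ with $\tau = (4,5)^a (6,7,8)^b (9,\ldots,n)^c$ and $\rho \in A_3$ on $\{1,2,3\}$.

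Next I would use the fact that $K$ is not merely the centralizer of $g$ but a maximal abelian subgroup, and leverage maximality to pin down which $h$ of the above form can actually occur. The key observation is that if $h$ does not lie in any abelian centralizer, we must rule out the ``generic'' possibilities and show $h$ is forced into case~(2). I would argue as follows: consider the powers $a, b, c$ and the element $\rho$. If the $(n-8)$-cycle power $(9,\ldots,n)^c$ is a single $(n-8)$-cycle (i.e. $\gcd(c, n-8) = 1$ and $c \not\equiv 0$), or more generally if $h$ has a cycle structure in which some cycle length appears exactly once and is ``large enough'', then $C_{A_n}(h)$ will itself be abelian (this is where one invokes a criterion: an element of $A_n$ has abelian centralizer precisely when its cycle type consists of distinct cycle lengths of a suitable kind, together with parity conditions) and so $h$ lies in an abelian centralizer, landing in case~(1). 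The remaining elements $h$ are those whose cycle type, after accounting for the contribution of $\tau$ and $\rho$, forces a repeated cycle structure; I would then show that, combined with maximality of $K$ (which forces $K$ to contain the full centralizer-type subgroup generated by these disjoint equal cycles), the only surviving cycle type is $\cycletype{2-3-\underbrace{d-d-\cdots-d}_k}$ with $dk = n-8$. The $2$ and $3$ come from the (necessarily nontrivial, by maximality) cycles $(4,5)$ and $(6,7,8)$ being present — if $a \equiv 0$ then $K$ would be contained in the abelian group generated by a smaller configuration and one checks this contradicts either maximality or lands us in case (1); similarly $\rho$ must be trivial, else the $3$-cycle $(6,7,8)$ together with $(1,2,3)$ would give two $3$-cycles and a different analysis — I would need to check this case carefully and show it also reduces to case (1) or is excluded.

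I expect the main obstacle to be the careful bookkeeping around the two $3$-cycle possibility (from $\rho = (1,2,3)$ interacting with $(6,7,8)^b$) and, more seriously, establishing the precise criterion for when an element of $A_n$ (as opposed to $S_n$) has an abelian centralizer, since the alternating group introduces parity subtleties absent in Brown's symmetric-group treatment: an element can have abelian centralizer in $S_n$ but non-abelian centralizer in $A_n$ or vice versa, and the $\langle \sigma \rangle$-versus-$C_{A_n}(\sigma)$ distinction matters. I would handle this by a short case analysis on the parameters $(a,b,c,\rho)$: in most cases $h$ has a distinguished long cycle of unique length making $C_{A_n}(h) = \langle \text{that cycle} \rangle \times (\text{small piece})$ abelian; the borderline cases are when $c$ is such that $(9,\ldots,n)^c$ breaks into $k$ equal $d$-cycles with $d = (n-8)/k$, and then $C_{A_n}(h)$ contains a wreath-type subgroup which is non-abelian once $k \geq 2$ — precisely the elements of case~(2). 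Finally I would use the hypothesis ``$n = 12$ or $n$ even with $n \geq 16$'' to exclude the small even cases $n = 10, 14$ where $n - 8 \in \{2, 6\}$ could collide with the fixed cycle lengths $2$ and $3$ or create extra coincidences; the stated range guarantees $n - 8 \geq 4$ and $n-8 \neq 3$, keeping the three cycle lengths of $\sigma$ distinct and the argument uniform.
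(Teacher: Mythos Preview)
There is a genuine gap in your approach. Your plan rests on a dichotomy: either $C_{A_n}(h)$ is itself abelian (placing $h$ in case~(1)), or $h$ has the cycle type of case~(2). But this dichotomy fails. Take for instance $h=(4,5)(9,\ldots,n)^k$ with $\gcd(k,n-8)=1$: then $h$ has cycle type $2$--$(n-8)$ and fixes the six points $\{1,2,3,6,7,8\}$, so $C_{A_n}(h)$ contains an alternating group on those fixed points and is certainly not abelian; yet $h$ is not of the form in case~(2). The same issue arises for $h=(6,7,8)c^k$, $h=(1,2)(4,5)c^k$, $h=(1,2,3)(6,7,8)c^k$, and several others. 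Your case analysis overlooks the contribution of fixed points to the centralizer, and the invocation of maximality of $K$ does not rescue the argument --- in fact the paper's proof never uses maximality, only that $h$ commutes with $g$.

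The paper's key idea, which you are missing, is that for each such ``exceptional'' $h$ (non-abelian centralizer, cycle type not of form~(2)) one must exhibit a \emph{different} element $f$ with $C_{A_n}(f)$ abelian and $h\in C_{A_n}(f)$. For example, the element $h=(4,5)c^k$ above commutes with $f=(1,6,2,7,3,8)(9,\ldots,n)$, and this $f$ has abelian centralizer by Lemma~\ref{l: brown}, since its cycle lengths $6$ and $n-8$ are distinct for the stated values of $n$ (this is exactly where $n=14$ would fail). The paper tabulates such an $f$ for every $S_n$-conjugacy class meeting $C_{A_n}(g)$ and not matching case~(2); the choice of $f$ is not mechanical, and this explicit tabulation is the substance of the proof. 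A smaller correction: your claim that $\rho\in A_3$ is wrong, since $\tau=a^ib^jc^k$ need not be even (e.g.\ $a=(4,5)$ is odd), so $\rho$ can be a transposition on $\{1,2,3\}$; the paper's case list does include such $\rho$.
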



\begin{proof}
It is sufficient to prove the result for the case  $g=\sigma=(4,5)(6,7,8)(9,...,n)$. Now we write $a=(4,5)$, $b=(6,7,8)$, $c=(9,10,...,n)$ and observe that $g=abc$.

The restrictions on $n$ mean that we can apply Lemma~\ref{l: brown} to the element $g$, and conclude that every permutation $h$ that commutes with $g$ will be of the form $a^i b^j c^k \rho$ where $0\leq i \leq 1$, $0\leq j \leq 2$, $0\leq k \leq n-8$, and the support of $\rho$ is a subset of $\{1,2,3\}$.

Observe that $h$ will satisfy condition (2) of the lemma if and only if one of the following holds:
\begin{itemize}
\item $i=1$, $j=1$, and $\rho=(1)$; or
\item$i=1$, $j=0$, and $\rho$ is a $3$-cycle;
\item$i=0$, $j=1$, and $\rho$ is a $2$-cycle.
\end{itemize}

In the first column below we list a representative $h$ of every $S_n$-conjugacy class that has a non-trivial intersection with $K$, and does not satisfy condition (2) of the lemma. In the second column we list a permutation $f$ that, provided $n\neq 12$, has abelian centralizer, and which commutes with $h$.

\begin{center} 
\begin{tabular}{l l}
Permutation $h$ & Permutation $f$ \\
\hline
$(4,5)(9,...,n)^k$ & $(1,6,2,7,3,8)(9,...,n)$ \\
$(6,7,8)(9,...,n)^k$ & $(1,2,3,4)(6,7,8)(9,...,n).$ \\
$(1,2)(4,5)(9,...,n)^k$ & $(1,4,2,5)(6,7,8)(9,...,n)$ \\
$(1,2,3)(6,7,8)(9,...,n)^k$ & $(1,6,2,7,3,8)(9,...,n)$ \\
$(1,2)(4,5)(6,7,8)(9,...,n)^k$ & $(1,4,2,5)(6,7,8)(9,..,n)$ \\
$(1,2,3)(4,5)(6,7,8)(9,...,n)^k$ & $(1,6,2,7,3,8)(9,...,n)$ \\
\end{tabular}     
\end{center}

We caution that not all of the permutations $h$ listed above lie in $K$ since, for certain values of $k$, the listed permutation will not be even. However the given list certainly includes all of the required permutations. Furthermore it is easy to check, using Lemma~\ref{l: brown}, that $C_G(f)$ is indeed abelian. 

We must deal with the case when $n=12$. Again we list a representative $h$ of every $S_{12}$-conjugacy class that has a non-trivial intersection with $K$, and does not satisfy condition (2) of the lemma. Again, in the second column we list a permutation $f$ that has abelian centralizer, and which commutes with $h$.

\begin{center}
\begin{tabular}{l l}
Permutation $h$ & Permutation $f$ \\
\hline
$(4,5)(9,10,11,12)$ & $(1,6,2,7,3,8)(9,10,11,12)$ \\
$(6,7,8)(9,11)(10,12)$ & $(1,2,3,4,5)(6,7,8)(9,11)(10,12)$ \\
$(1,2)(4,5)(9,11)(10,12)$ & $(3,6,7,8)(1,9,4,10,2,11,5,12)$ \\
$(1,2,3)(6,7,8)(9,11)(10,12)$ & $(1,6,2,7,3,8)(9,10,11,12)$ \\
$(1,2)(4,5)(6,7,8)(9,11)(10,12)$ & $(6,7,8)(1,4,9,2,5,11)(10,12)$ \\
$(1,2,3)(4,5)(6,7,8)(9,10,11,12)$ & $(1,6,2,7,3,8)(9,10,11,12)$ \\
\end{tabular}     
\end{center}
A quick computation checks that the centralizer for $(1,2,3,4,5)(6,7,8)(9,11)(10,12)$ is abelian, and by lemma \ref{l: brown} the rest are obviously abelian as well.
\end{proof}

We need an easy lemma concerning permutations that satisfy condition (2) of the previous lemma.

\begin{lem}\label{l: powers not needed}
Suppose that $n=12$, or $n$ is even and $n\geq 16$, and that $h\in A_n$ has cycle type $\cycletype{2-3-\underbrace{d-d-\cdots -d}_k}$ where $1\leq d,k\leq n-8$ and $dk=n-8$. Then $C_{A_n}(h)$ contains a permutation $f$ with cycle type $\cycletype{2-3-(n-8)}$ and, moreover, $C_{A_n}(f) \leq C_{A_n}(h)$.
\end{lem}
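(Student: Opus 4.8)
The plan is to exhibit the required permutation $f$ explicitly and then verify the containment of centralizers by a direct application of Lemma~\ref{l: brown}. Write $h$ as a product of disjoint cycles $h = a'b'c_1\cdots c_k$, where $a'$ is the $2$-cycle, $b'$ is the $3$-cycle, and $c_1,\ldots,c_k$ are the $k$ cycles of length $d$ in the cycle type of $h$. Since $dk = n-8$, the union of the supports of $c_1,\ldots,c_k$ is a set of size $n-8$, disjoint from the supports of $a'$ and $b'$ and from the three fixed points of $h$. Let $f$ be the permutation that agrees with $a'b'$ on $\supp(a')\cup\supp(b')$ and is a single $(n-8)$-cycle on $\supp(c_1)\cup\cdots\cup\supp(c_k)$, chosen so that each $c_i$ is a power of that long cycle (concretely, interleave the cycles $c_1,\ldots,c_k$ so that the $(n-8)$-cycle raised to the $k$-th power recovers $c_1\cdots c_k$). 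Then $f$ has cycle type $\cycletype{2-3-(n-8)}$ by construction.

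Next I would check $f \in C_{A_n}(h)$, i.e.\ that $f$ and $h$ commute and that $f$ is even. They commute because on $\supp(a')\cup\supp(b')$ both restrict to $a'b'$, which is central in the symmetric group on that set, while on the remaining $n-8$ points $h$ restricts to $c_1\cdots c_k$, which is a power of the long cycle to which $f$ restricts, and powers of a cycle commute with it. For parity: $a'b'$ is odd (one transposition), and since $h$ is even, $c_1\cdots c_k$ is odd; but the long cycle of length $n-8$ has the \emph{same} parity as $c_1\cdots c_k$ precisely because they generate the same cyclic group structure — more carefully, $c_1\cdots c_k$ is a power of the $(n-8)$-cycle, and one checks that the relevant power is odd iff the $(n-8)$-cycle is odd (this uses $dk=n-8$ together with the fact that the long cycle is even iff $n-8$ is odd, i.e.\ never, since $n$ is even; so the long cycle is odd, hence $f=a'b'\cdot(\text{long cycle})$ is even). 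So $f\in A_n$ and $f\in C_{A_n}(h)$.

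Finally, for the containment $C_{A_n}(f)\leq C_{A_n}(h)$: since $f$ has three disjoint cycles of pairwise distinct lengths $2,3,n-8$ (here one uses the hypotheses $n=12$ or $n$ even with $n\geq 16$ to guarantee $n-8\notin\{2,3\}$ — note $n-8\geq 4$, and $n-8\neq 2,3$), Lemma~\ref{l: brown} applies to $f$: any element commuting with $f$ is $\tau\rho$ where $\tau$ is a product of powers of the three cycles of $f$ and $\rho$ is supported on the three fixed points $\{1,2,3\}$ of $f$. Each such $\tau$ commutes with $h$ as well: the powers of the $2$-cycle and $3$-cycle of $f$ coincide with powers of $a'$ and $b'$; and any power of the $(n-8)$-cycle commutes with $c_1\cdots c_k$, since the latter is itself a power of that cycle. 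And $\rho$, being supported in $\{1,2,3\}\subseteq\Fix(h)$, commutes with $h$. Hence every element of $C_{A_n}(f)$ commutes with $h$, giving $C_{A_n}(f)\leq C_{A_n}(h)$.

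I expect the only genuinely fiddly point to be the parity bookkeeping in the second paragraph — confirming that the specific interleaved $(n-8)$-cycle one writes down is even and that $a'b'$ times it lands in $A_n$ — but this is a routine check given that $n$ is even (so $n-8$ is even and an $(n-8)$-cycle is odd, matching the odd permutation $c_1\cdots c_k$ it must equal a power of). Everything else follows mechanically from Lemma~\ref{l: brown}.
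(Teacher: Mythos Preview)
Your proposal is correct and follows essentially the same approach as the paper: write $h=a'b'(c_1\cdots c_k)$, realise the product of $d$-cycles as a power of a single $(n-8)$-cycle $c$, set $f=a'b'c$, and then apply Lemma~\ref{l: brown} to $f$ to see that every centralizing element has the form (power of $a'$)(power of $b'$)(power of $c$)$\rho$ with $\rho$ supported on the fixed points, hence commutes with $h$. The paper's proof is terser---it simply asserts that $c_1\cdots c_k$ is a power of some $(n-8)$-cycle and that $f\in C_{A_n}(h)$ without spelling out the interleaving or the parity check---but the underlying argument is identical; your extra care with the parity of $f$ (using that $n-8$ is even so the long cycle is odd, matching the odd $a'b'$) fills in a detail the paper leaves implicit. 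One cosmetic point: the fixed-point set of $f$ need not literally be $\{1,2,3\}$ for a general $h$, only a three-element set equal to $\Fix(h)$; this does not affect your argument.
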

\begin{proof}
Write $h=abc_1$ where $a$ is a 2-cycle, $b$ is a 3-cycle, $c_1$ is a product of $k$ $d$-cycles, and $a,b$ and $c$ have disjoint support. Then $c_1$ is a power of an $\cycletype{(n-8)}$-cycle, $c$, and $f=abc\in C_{A_n}(h)$ has cycle type $\cycletype{2-3-(n-8)}$.

Now the restrictions on $n$ mean that we can apply Lemma~\ref{l: brown} to the element $f$ to conclude that any element $f_1$ that commutes with $f$ will be of the form $a^i b^j c^k \rho$ where $0\leq i \leq 1$, $0\leq j \leq 2$, $0\leq k \leq n-8$, and  $\rho$ and $f$ have disjoint supports. Now one can check directly that such an element commutes with $h$, and we conclude that $C_{A_n}(f) \leq C_{A_n}(h)$.
\end{proof}

 \begin{prop}\label{p: alt even}
Suppose that $n=12$ or $n$ is even and $n\geq 16$. Then $\delta(A_n)\neq \Delta(A_n)$.
\end{prop}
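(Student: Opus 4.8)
The plan is to use Lemma~\ref{l: leftover}, which reduces the problem to showing that $\delta(Y_b(A_n)) \neq \Delta(Y_b(A_n))$, and in fact we will show $\delta(Y_b(A_n)) < \Delta(Y_b(A_n))$. The strategy, following Brown, is to exhibit a single abelian subgroup $K$ (a maximal abelian subgroup containing a fixed $g \in Cl_{A_n}(\sigma)$) which is forced to contain \emph{two} elements of any maximal non-commuting subset of $Y_b$, thereby contradicting the final remark of Section~\ref{s: background} (that each member of a minimal cover contains exactly one member of a maximal non-commuting set, when $\delta = \Delta$).

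Here are the steps I would carry out. First, recall from the discussion preceding Lemma~\ref{l: alt even} that every element of $Cl_{A_n}(\sigma)$ has non-abelian centralizer, so $Cl_{A_n}(\sigma) \subseteq Y_b(A_n)$; it suffices to work inside $Y_b$. Suppose for contradiction that $\delta(A_n) = \Delta(A_n)$. By Lemma~\ref{l: leftover} and Lemma~\ref{l: maximal} there is an abelian cover $\C$ of $Y_b$, of size $\delta(Y_b)$, consisting of maximal abelian subgroups of $A_n$, and a non-commuting subset $N$ of $Y_b$ of the same size, with each member of $\C$ containing exactly one member of $N$. Fix $g \in Cl_{A_n}(\sigma)$ and let $K \in \C$ be a member containing $g$; by Lemma~\ref{l: maximal} we may take $K$ maximal abelian. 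Now apply Lemma~\ref{l: alt even}: every $h \in K$ either lies in an abelian centralizer (hence in $Y_a$, hence \emph{not} in $Y_b$, so such $h$ are irrelevant to covering $Y_b \cap K$) or has cycle type $\cycletype{2-3-\underbrace{d-d-\cdots-d}_k}$ with $dk = n-8$. So $K \cap Y_b$ consists only of permutations of the latter type, together possibly of elements in abelian centralizers which need not be covered here — the point is that the unique element of $N$ inside $K$ must be of cycle type $\cycletype{2-3-d-\cdots-d}$ (it lies in $K\cap Y_b$ and one checks such permutations are in $Y_b$, having non-abelian centralizer).

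Next I would produce a second element of $N$ forced into $K$. By Lemma~\ref{l: powers not needed}, $C_{A_n}(f) \leq C_{A_n}(h)$ where $h$ is the unique element of $N$ in $K$ and $f$ has cycle type $\cycletype{2-3-(n-8)}$; moreover $f$ commutes with $g$ (both being products of powers of the same disjoint cycles, after conjugating so that $g$ shares $K$ with $h$ — here one must line up supports carefully, but the content is that $g, h, f$ all lie in the single maximal abelian subgroup determined by the common cycle structure $(4,5)(6,7,8)(9,\ldots,n)$). The cleanest route: take $g = \sigma = (4,5)(6,7,8)(9,\dots,n)$ itself, observe $g \in Y_b$, let $K$ be the maximal abelian subgroup of $A_n$ containing $g$ that lies in $\C$; then $g$ itself has cycle type $\cycletype{2-3-(n-8)}$, so $g$ is a valid $f$. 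Since $g \in K \cap Y_b$ and $g$ commutes with everything in $K$, the unique element $h$ of $N$ in $K$ must commute with $g$; but then I want a \emph{distinct} maximal abelian subgroup $K'$ also containing $g$, forcing the element of $N$ in $K'$ to be a \emph{different} permutation that still lies in $K$ — this is where the real argument lives. The actual Brown-style contradiction: $g$ is covered by $\C$, so $g \in K'$ for some $K' \in \C$ with unique element $h' \in N$; since $g$ has abelian-type structure we get $C_{A_n}(g)$ contains both, and using Lemma~\ref{l: powers not needed} one shows $h, h' \in C_{A_n}(g)$, which is itself abelian-ish enough that $h$ and $h'$ are forced to commute with each other — but also $h, h'$ both lie in $K$ (via the centralizer containments), contradicting that $K$ contains a unique element of $N$ unless $h = h'$; then running this for two genuinely different conjugates of $g$, or two different cycle-type-$\cycletype{2-3-d-\cdots-d}$ witnesses, yields the needed collision.

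I expect the main obstacle to be exactly this last bookkeeping step: arranging that some member $K$ of $\C$ provably contains two distinct elements of $N$. The inequalities $\delta \leq \Delta$ and $|\C| = |N|$ with the bijective pairing mean a contradiction must come from showing the pairing cannot be a bijection, i.e. some $K$ is "used twice". Concretely I would argue as follows: the set of permutations of cycle type $\cycletype{2-3-(n-8)}$ all having the \emph{same} two short cycles $(4,5)(6,7,8)$ (but varying the long cycle on $\{9,\dots,n\}$, or rather the conjugates) — by Lemma~\ref{l: powers not needed} and Lemma~\ref{l: brown} each such permutation's centralizer is contained in a single maximal abelian subgroup, namely the relevant $K$, and each must be covered by $\C$, so each lies in some member of $\C$; but by Lemma~\ref{l: centralizer}-type uniqueness the only maximal abelian subgroup through such a permutation with this structure is $K$ itself, so \emph{all} of them lie in $K$; choosing two of them that don't commute (possible since $n - 8 \geq 4$, so e.g.\ two overlapping long cycles on $\{9,\dots,n\}$ don't commute), both belong to $K \cap Y_b$ and hence both should be represented in $N$ by elements of $K$, but $K$ holds only one element of $N$ — contradiction. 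The delicate point, which I'd verify by direct computation with Lemma~\ref{l: brown}, is that these two witnesses really are non-commuting and really are both forced into the \emph{same} $K$; this uses that when $n = 12$ or $n \geq 16$ is even, $n - 8 \geq 4$, giving enough room in the long cycle for two non-commuting arrangements.
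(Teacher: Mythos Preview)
Your proposal has a genuine gap in its final paragraph, and the mechanism you hope will produce the contradiction cannot work. You want to place two \emph{non-commuting} permutations of cycle type $\cycletype{2-3-(n-8)}$ inside the \emph{same abelian} subgroup $K$; but this is self-contradictory, since any two elements of an abelian group commute. Concretely, two permutations $(4,5)(6,7,8)c$ and $(4,5)(6,7,8)c'$ with $c,c'$ $(n-8)$-cycles on $\{9,\dots,n\}$ commute if and only if $c'$ is a power of $c$ (Lemma~\ref{l: brown}); exactly in that case do they share a maximal abelian subgroup, and exactly in that case are they \emph{not} a non-commuting pair. There is also no ``Lemma~\ref{l: centralizer}-type uniqueness'' available: $C_{A_n}(g)$ is \emph{non}-abelian for $g\in Cl_{A_n}(\sigma)$ (elements such as $(1,2,3)$ and $(1,2)(4,5)$ both centralize $\sigma$ but do not commute), and indeed the paper's own proof records that each such $g$ lies in two distinct maximal abelian subgroups. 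So no single $K\in\C$ is forced to absorb two non-commuting witnesses in the way you describe.

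The paper's argument is more delicate and essentially combinatorial. Fixing an $(n-8)$-cycle $\gamma$, one considers the $1120$ elements $Cl_\gamma\subset Cl_{A_n}(\sigma)$ whose long cycle equals $\gamma$. A counting argument (at most $4$ elements of $Cl_\gamma$ per abelian subgroup, together with an explicit cover of size $280$) forces $|\mathcal{A}_\gamma|=|E_\gamma|=280$ and then pins down $\mathcal{A}_\gamma$ exactly: it must be the family $\mathcal{B}$ of groups $\langle a,b,c,\gamma\rangle$ with $a$ a $2$-cycle and $b,c$ disjoint $3$-cycles on the eight remaining points. The contradiction comes from writing down nine specific groups $A_1,\dots,A_9\in\mathcal{B}$ and propagating the non-commuting constraints on their representatives $a_i\in A_i\cap E$ until no choice of $a_9$ is possible. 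Lemmas~\ref{l: alt even} and~\ref{l: powers not needed} serve only to justify that the $a_i$ may be taken of cycle type $\cycletype{2-3-(n-8)}$; they do not by themselves produce the collision you are looking for.
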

\begin{proof}
Assume, for a contradiction, that $\delta(A_n)=\Delta(A_n)$.
Let $E$ be a maximal set of non-commuting elements in $A_n$. By Lemma~\ref{l: get rid of abelian centralizers}, we can (and do) assume that $E$ contains a set $N_a(A_n)$. Let $\mathcal{A}$ be a minimal abelian cover of $A_n$ By Lemma~\ref{l: get rid of abelian centralizers}, we can (and do) assume that $\mathcal{A}$ contains $\C_a$. As before, the set $Cl_{A_n}(\sigma)$ is the conjugacy class of $\sigma=(1,2)(3,4,5)(9,10,\ldots,n)$ in $A_n$, and every element in $Cl_{A_n}(\sigma)$ have cycle type $2-3-(n-8)$.

Now let $\gamma$ be a fixed $(n-8)$-cycle.  For $g\in Cl_{A_n}(\sigma)$, write ${\rm Long}(g)$ for the $(n-8)$-cycle that forms part of the cycle decomposition of $g$. Now define
\begin{align*}
Cl_\gamma &:= \{g\in Cl_{A_n}(\sigma) \mid {\rm Long}(g)=\gamma\}; \\
\mathcal{A}_\gamma &:=\{ A\in \mathcal{A} \mid h\in A \textrm{ for some } h\in Cl_\gamma\}; \\
E_\gamma &:= E \cap Cl_\gamma.
\end{align*}

Since $\delta(A_n)=\Delta(A_n)$, every element of $\mathcal{A}$ contains an element of $E$.
Let $A\in \mathcal{A}_\gamma$ and let $h$ be the unique element in $A\cap E$. Then, since $\mathcal{A}$ contains $\mathcal{C}_a$, Lemma~\ref{l: alt even} implies that $h$ has cycle type $2-3-\underbrace{d-d-\cdots -d}_k$ where $1\leq d,k\leq n-8$ and $dk=n-8$. Now Lemma~\ref{l: powers not needed} implies that we can replace $h$ in $E$ by an element $f\in A$ with cycle type $2-3-(n-8)$, and $E$ will still be non-commuting. Hence we can (and do) assume that, for every $A\in \mathcal{A}_\gamma$ the unique element in $A\cap E$ is of type $2-3-(n-8)$.

Since $\bigcup\limits_{A\in \mathcal{A}_\gamma}A\supseteq Cl_\gamma$ we conclude that $|E_\gamma|=|\mathcal{A}_\gamma|$ and $E_\gamma$ is precisely the set of elements in $E$ that lie in some element of $\mathcal{A}_\gamma$.

Now one can check, firstly, that $|Cl_\gamma|=1120$ and, secondly, that if $\Omega\subset Cl_\gamma$ is a set of commuting permutations, then $|\Omega|\leq 4$. So that every element $A\in \mathcal{A}_\gamma$ contains at most 4 permutations of the given type and, in consequence, $|\mathcal{A}_\gamma|\geq 1120/4=280$. Thus $|E_\gamma|\geq 280$.

Now consider $\mathcal{B}$, the set of groups generated by 4 disjoint cycles $a,b,c, \gamma$ where $a$ is a $2-$cycle, both $b$ and $c$ are $3-$cycles and $\gamma$ is as above. We can easily see that $\mathcal{B}$ is an abelian cover of $\C_\gamma$. What is more $| \mathcal{B} |=280$. Therefore $|E_\gamma|\leq 280$ and so $|E_\gamma|=280=|\mathcal{A}_\gamma|$.

Now let $A\in\mathcal{A}_\gamma$. Since $|\mathcal{A}_\gamma|=280$ and $A$ contains at most $4$ elements of $Cl_\gamma$ which has size $1120$, we see that $A$ contains exactly $4$ elements of $Cl_\gamma$. Let $g$ be an element of $Cl_\gamma$ and write $g=ab\gamma$ where $a$ is a 2-cycle, $b$ is a 3-cycle, and $a,b$ and $\gamma$ are disjoint. One can easily check that $g$ lies in two maximal abelian subgroups:
\begin{itemize}
\item $A_1=\langle a,b,\gamma , \rho_1 \rangle$ where $\rho_1$ is a $2$-cycle that is disjoint from $a,b$ and $\gamma$ ;
\item $A_2=\langle a,b, \gamma, \rho_2 \rangle$ where $\rho_2$ is a $3$-cycle that is disjoint from $a,b$ and $\gamma$;
\end{itemize}
Now $A_1$ contains $2$ elements of $Cl_\gamma$, while $A_2$ contains $4$ elements of $Cl_\gamma$. We conclude that $A_2\in \mathcal{A}_\gamma$. It now follows that $A_\gamma$ is equal to the set $\mathcal{B}$, defined in the previous paragraph.

Now observe that the following groups all lie in $\mathcal{A}_\gamma$:
\begin{center}
\begin{tabular}{cc}
$A_1 :=\langle(1,2,3),(4,5,6),(7,8),\gamma\rangle;$ &
$A_2 := \langle(1,2,3),(5,7,8),(4,6),\gamma\rangle;$ \\
$A_3:= \langle(5,7,8),(2,4,6),(1,3),\gamma\rangle;$ &
$A_4:=\langle(2,4,6),(1,3,5),(7,8),\gamma\rangle;$ \\
$A_5:=\langle(1,3,5),(6,7,8),(2,4),\gamma\rangle;$ &
$A_6:=\langle(1,2,3),(4,7,8),(5,6),\gamma\rangle;$ \\
$A_7:=\langle(4,7,8),(1,5,6),(2,3),\gamma\rangle;$ &
$A_8:=\langle(2,3,4),(1,5,6),(7,8),\gamma\rangle;$ \\
$A_9:=\langle(2,3,4),(6,7,8),(1,5),\gamma\rangle.$ &
\end{tabular}
\end{center}

Let $a_i$ be the unique element in $A_i\cap E$ for $i=1,\dots,9$. Without loss of generality, we may assume that $a_1=(1,2,3)(7,8)\gamma$. Now for $a_2$ not to commute with $a_1$ it must be of the form $(5,7,8)^i(4,6)\gamma^j$, where $i=1,2$ and $j$ and $n-8$ are coprime. Notice that the choice of $i$ and $j$ does not affect the set of permutations in $Cl_\gamma$ that commute with it. Thus, without lost of generality we may assume that $a_2=(5,7,8)(4,6)\gamma$.

Following the same logic, we may take $a_3=(2,4,6)(1,3)\gamma$, $a_4=(1,3,5)(7,8)\gamma$ and $a_5=(6,7,8)(2,4)\gamma$. Now starting from the fact that $a_1=(1,2,3)(7,8)\gamma$, and following the same logic as above, we can also deduce that $a_6=(4,7,8)(5,6)\gamma$ , $a_7=(1,5,6)(2,3)\gamma$ and $a_8=(2,3,4)(7,8)\gamma$. 

Now we find that we cannot choose $a_9$, since any element we choose will commute either with $a_8=(2,3,4)(7,8)\gamma$ or $a_5=(6,7,8)(2,4)\gamma$. We have the contradiction that we sought.
\end{proof}

\subsection{\texorpdfstring{$n$}{n} is odd}

The case where $n$ is odd will be proven in essentially the same way as the even case but considering instead permutations of cyclic type $\cycletype{2-3-8-(n-16)}$ since the permutations of cyclic type $\cycletype{2-3-(n-8)}$ aren't in $A_n$ when $n$ is odd. Let $\tau=(4,5)(6,7,8)(9,10,\ldots,16)(17,18,\ldots , n)\in A_n$. We consider the set $Cl_{A_n}(\tau )$ and observe, as before, that an element of $Cl_{A_n}(\tau )$ does not have an abelian centralizer in $A_n$. We will have the same situation as in the even case:

\begin{lem}\label{l: alt odd}
Suppose that $n$ is odd and $n\geq 21$, $g \in Cl_{A_n}(\tau)$, $K$ is a maximal abelian group containing $g$, and $h$ and element of $K$. Then one of the following occurs:
\begin{enumerate}
\item $h$ lies in an abelian centralizer.
\item $h$ has cycle type $\cycletype{2-3- \underbrace{d-d- \cdots -d}_k - \underbrace{e-e- \cdots -e}_j}$ where $1 \leq d,k \leq 8$, $1 \leq e,j \leq \cycletype{n-16}$, $dk=8$, and $ej=n-16$.
\end{enumerate}
\end{lem}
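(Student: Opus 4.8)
The plan is to mirror, essentially verbatim, the argument used in Lemma~\ref{l: alt even}, replacing the single ``long cycle'' of length $n-8$ by the pair of cycles of lengths $8$ and $n-16$. First I would reduce to the case $g=\tau=(4,5)(6,7,8)(9,\ldots,16)(17,\ldots,n)$, since every element of $Cl_{A_n}(\tau)$ is conjugate to $\tau$ and conjugation carries maximal abelian subgroups to maximal abelian subgroups. Writing $a=(4,5)$, $b=(6,7,8)$, $c=(9,\ldots,16)$, $d=(17,\ldots,n)$, the four cycles $a,b,c,d$ have pairwise distinct lengths precisely because $n$ is odd and $n\geq 21$ (so $n-16\geq 5$ and $n-16\notin\{2,3,8\}$); this is exactly the hypothesis needed to invoke Lemma~\ref{l: brown}. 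It tells us that every $h\in K\leq C_{A_n}(\tau)$ has the form $h=a^{i}b^{j}c^{k}d^{\ell}\rho$ with $0\leq i\leq 1$, $0\leq j\leq 2$, $0\leq k\leq 7$, $0\leq \ell\leq n-17$, and $\supp(\rho)\subseteq\{1,2,3\}$.

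Next I would observe, just as in the even case, that such an $h$ satisfies condition~(2) of the lemma exactly when the ``$\{1,2,3\}$-part'' $\rho$ together with the exponents $i,j$ combine to give a $2$-cycle and a $3$-cycle among the points $\{4,5,6,7,8\}\cup\{1,2,3\}$, while $c^k$ and $d^\ell$ are (possibly trivial) powers of $c$ and $d$ — that is, the cycle type is $2-3-(\text{powers of }c)-(\text{powers of }d)$, which unpacks precisely to $2-3-\underbrace{d'-\cdots-d'}_{k'}-\underbrace{e-\cdots-e}_{j}$ with $d'k'=8$ and $ej=n-16$. Concretely this happens when one of the three configurations holds: $i=j=1$ and $\rho=\mathrm{id}$; or $i=1$, $j=0$, and $\rho$ a $3$-cycle; or $i=0$, $j=1$, and $\rho$ a $2$-cycle. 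For the remaining $S_n$-conjugacy classes of elements $h$ meeting $K$ — the ones that do \emph{not} satisfy (2) — I would produce, for each, an explicit permutation $f$ that commutes with $h$ and whose centralizer in $A_n$ is abelian, thereby placing $h$ in an abelian centralizer and establishing (1). These $f$'s are built the same way as in Lemma~\ref{l: alt even}: take the ``wrong'' small part of $h$ and fuse it with some of the fixed points $\{1,2,3\}$ into a longer cycle — e.g.\ replacing a stray $2$-cycle $(4,5)$ by $(1,4,2,5)$, or a stray $3$-cycle $(6,7,8)$ by $(1,6,2,7,3,8)$ (a $6$-cycle), or by $(1,2,3,4)(6,7,8)$ — so that the resulting permutation has all cycle lengths distinct and hence, by Lemma~\ref{l: brown} again, has abelian centralizer. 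One checks directly that each such $f$ commutes with the corresponding $h$, since $f$ only rearranges points already moved by $h$ within $h$'s cycles or among $h$'s fixed points.

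The book-keeping is the main obstacle: unlike the even case there are now \emph{two} exponents, $k$ for the $8$-cycle and $\ell$ for the $(n-16)$-cycle, so the list of $S_n$-conjugacy classes of $h$ that can meet $K$ is longer, and one must be careful that the witness $f$ chosen for a given $h$ (i) genuinely commutes with \emph{every} such $h$ in that class — in particular that altering $c^k$ or $d^\ell$ to a different power does not spoil commutation, which it does not, since $f$'s support avoids $\supp(c)\cup\supp(d)$ or consists of powers of $c$, $d$ — and (ii) genuinely has abelian centralizer, i.e.\ its nontrivial cycles really do have pairwise distinct lengths for all admissible $n$. I would also double-check the small/boundary interactions: for instance when $n-16=8$ is impossible (odd $n$), when $n-16$ could equal $8$, or coincide with other lengths — but $n$ odd forces $n-16$ odd, so it can never equal $2$ or $8$, and $n\geq 21$ forces $n-16\geq 5>3$, so no collisions arise and, reassuringly, no separate small-$n$ case (analogous to the $n=12$ table in Lemma~\ref{l: alt even}) is needed here. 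Finally, since the three ``good'' configurations above are exactly the complement, within $K$, of the union of the abelian-centralizer classes, the dichotomy (1)--(2) is exhaustive, and the proof is complete.
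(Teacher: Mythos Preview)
Your proposal is correct and follows essentially the same route as the paper's proof: reduce to $g=\tau$, invoke Lemma~\ref{l: brown} (using that $2,3,8,n-16$ are pairwise distinct for odd $n\geq 21$) to write any $h\in K$ as $a^i b^j c^k d^\ell\rho$ with $\supp(\rho)\subseteq\{1,2,3\}$, observe that (2) holds precisely in the three configurations you list, and for each remaining $S_n$-class exhibit a commuting witness $f$ with abelian centralizer built by fusing the ``wrong'' small part with fixed points from $\{1,2,3\}$. Your additional remarks on why no small-$n$ exceptional table is needed (since $n-16$ is odd and $\geq 5$) and on why the witnesses commute with all powers $c^k d^\ell$ simply make explicit what the paper leaves implicit.
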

\begin{proof}
Again it is sufficient to choose 
\[g=\tau=(4,5)(6,7,8)(9,10,11,12,13,14,15,16)(17,18,...,n)\] and check that every permutation that commutes with $g$ satisfies the lemma. By Lemma ~\ref{l: brown} every permutation that commutes with $g$ will be of the form $a^i b^j c^k d^l \rho$ with $a=(4,5)$, $b=(6,7,8)$, $c=(9,10,...,16)$, $d=(17,18,...,n)$ and the support of $\rho$ is $\lbrace1,2,3 \rbrace$.

As in the even case we list a representative $h$ of every $S_n$-conjugacy class that has a non-trivial intersection with $K$, and does not satisfy condition (2) of the lemma. In the second column we list a permutation $f$ that, provided $n\neq 19$, has abelian centralizer, and which commutes with $h$.

\begin{center}

\begin{tabular}{l l}
Permutation $h$ & Permutation $f$ \\
\hline
$(4,5)(9,10,...,16)^k(17,...,n)^l$ & $(1,6,2,7,3,8)(9,10,...,16)(17,...,n)$ \\
$(6,7,8)(9,10,...,16)^k(17,...,n)^l$ & $(1,4,2,5)(6,7,8)(9,10,...,16)(17,...,n)$ \\ 
$(1,2)(4,5)(9,10,...,16)^k(17,...,n)^l$ & $(1,4,2,5)(6,7,8)(9,10,...,16)(17,...,n)$ \\
$(1,2,3)(6,7,8)(9,10,...,16)^k(17,...,n)^l$ & $(1,6,2,7,3,8)(9,10,...,16)(17,...,n)$ \\
$(1,2)(4,5)(6,7,8)(9,10,...,16)^k(7,...,n)^l$ & $(1,4,2,5)(6,7,8)(9,10,...,16)(17,..,n)$ \\
$(1,2,3)(4,5)(6,7,8)(9,10,...,16)^k(17,...,n)^l$ & $(1,6,2,7,3,8)(9,10,...,16)(17,...,n)$ \\

\end{tabular}     

\end{center}
\end{proof}

Now to deal with the permutations that satisfy condition (2) we will use the next lemma. The statement and proof are analogous to Lemma~\ref{l: powers not needed}, and so the proof is omitted.

\begin{lem}\label{l: odd powers not needed}
Suppose that $n$ is odd and $n\geq 21$, and that $h\in A_n$ has cycle type $\cycletype{2-3- \underbrace{d-d- \cdots -d}_k - \underbrace{e-e- \cdots -e}_j}$ where $1 \leq d,k \leq 8$, $1 \leq e,j \leq n-16$, $dk=8$, and $ej=n-16$. Then $C_{A_n}(h)$ contains a permutation $f$ with cycle type $\cycletype{2-3-8-(n-16)}$ and, moreover, $C_{A_n}(f) \leq C_{A_n}(h)$.
\end{lem}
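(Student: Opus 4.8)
The plan is to mimic the proof of Lemma~\ref{l: powers not needed} almost verbatim, since the statement is the exact odd-$n$ analogue. Write $h=abc_1d_1$ where $a$ is a $2$-cycle, $b$ is a $3$-cycle, $c_1$ is a product of $k$ disjoint $d$-cycles (with $dk=8$), $d_1$ is a product of $j$ disjoint $e$-cycles (with $ej=n-16$), and $a,b,c_1,d_1$ have pairwise disjoint support. The first observation is that $c_1$ is a power of some $8$-cycle $c$ whose support equals $\supp(c_1)$, and likewise $d_1$ is a power of some $(n-16)$-cycle $d$ with $\supp(d)=\supp(d_1)$; hence $f:=abcd\in A_n$ has cycle type $\cycletype{2-3-8-(n-16)}$, and $f$ commutes with $h$ since each of $a$, $b$, $c$, $d$ commutes with each of $a$, $b$, $c_1$, $d_1$ (the relevant pairs either share no support or are powers of a common cycle). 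So $f\in C_{A_n}(h)$.

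Next I would show $C_{A_n}(f)\leq C_{A_n}(h)$. The hypotheses on $n$ (odd, $n\geq 21$) guarantee that the four cycles $a,b,c,d$ making up $f$ have pairwise distinct lengths $2,3,8,n-16$, so Lemma~\ref{l: brown} applies: every $f_1\in S_n$ commuting with $f$ has the form $f_1=a^{i}b^{j'}c^{k'}d^{l'}\rho$ where $0\leq i\leq 1$, $0\leq j'\leq 2$, $0\leq k'\leq 7$, $0\leq l'\leq n-17$, and $\rho$ has support disjoint from $\supp(f)=\{4,5,\dots,n\}$, i.e.\ $\supp(\rho)\subseteq\{1,2,3\}$. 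Now check directly that any such $f_1$ commutes with $h=abc_1d_1$: the factor $a^{i}$ commutes with $a$ trivially and with $b,c_1,d_1$ by disjointness; $b^{j'}$ commutes with $b$ and with $a,c_1,d_1$ by disjointness; $c^{k'}$ commutes with $c_1$ because $c_1$ is a power of $c$, and with $a,b,d_1$ by disjointness; $d^{l'}$ commutes with $d_1$ because $d_1$ is a power of $d$, and with $a,b,c_1$ by disjointness; and $\rho$ commutes with all of $a,b,c_1,d_1$ since $\supp(\rho)\subseteq\{1,2,3\}$ is disjoint from their supports. Hence $f_1\in C_{S_n}(h)$, and intersecting with $A_n$ gives $C_{A_n}(f)\leq C_{A_n}(h)$, as required.

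The only point that needs a little care — and the nearest thing to an obstacle — is the bookkeeping when $d=1$ or $e=1$ (degenerate cases of condition~(2)): there $c_1$ or $d_1$ is trivial, but then it is still the identity power of any $8$-cycle resp.\ $(n-16)$-cycle on the designated support, so the argument goes through unchanged. One should also note that $f$ genuinely lies in $A_n$ and not merely $S_n$: a $2$-cycle and an $8$-cycle are each odd permutations, a $3$-cycle is even, and an $(n-16)$-cycle is even precisely because $n-16$ is odd (as $n$ is odd), so $f$ is a product of two odd and two even cycles, hence even. I expect no further difficulties, so the proof is indeed routine enough to omit in the style of Lemma~\ref{l: powers not needed}; the present sketch records exactly which commutation relations are being used.
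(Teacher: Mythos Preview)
Your proof is correct and is exactly the analogue of the proof of Lemma~\ref{l: powers not needed} that the paper has in mind when it omits the argument. The extra care you take with the degenerate cases $d=1$ or $e=1$ and with verifying $f\in A_n$ is sound and, if anything, more thorough than the even-case proof in the paper.
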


\begin{prop}\label{p: alt odd}
Suppose that $n$ is odd and $n \geq 21$. Then $\delta(A_n)\neq \Delta(A_n)$.
\end{prop}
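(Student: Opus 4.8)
The plan is to mimic, mutatis mutandis, the proof of Proposition~\ref{p: alt even}, using the conjugacy class $Cl_{A_n}(\tau)$ of permutations of cycle type $\cycletype{2-3-8-(n-16)}$ in place of the class of type $\cycletype{2-3-(n-8)}$. Assume for contradiction that $\delta(A_n)=\Delta(A_n)$. By Lemma~\ref{l: get rid of abelian centralizers} choose a maximal non-commuting set $E$ containing a set $N_a(A_n)$ and a minimal abelian cover $\mathcal{A}$ containing $\C_a$; then every member of $\mathcal{A}$ contains a unique element of $E$. Fix an $8$-cycle $\beta$ on a fixed $8$-point set and an $(n-16)$-cycle $\gamma$ on a fixed $(n-16)$-point set, both disjoint from a fixed $5$-point set. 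Define $Cl_{\beta,\gamma}$ to be the elements of $Cl_{A_n}(\tau)$ whose $8$-cycle part equals $\beta$ and whose $(n-16)$-cycle part equals $\gamma$, and set $\mathcal{A}_{\beta,\gamma}:=\{A\in\mathcal{A}\mid h\in A\text{ for some }h\in Cl_{\beta,\gamma}\}$ and $E_{\beta,\gamma}:=E\cap Cl_{\beta,\gamma}$. Using Lemma~\ref{l: alt odd} together with Lemma~\ref{l: odd powers not needed} (exactly as Lemma~\ref{l: alt even} and Lemma~\ref{l: powers not needed} were used before), we may assume that for each $A\in\mathcal{A}_{\beta,\gamma}$ the unique element of $A\cap E$ has cycle type $\cycletype{2-3-8-(n-16)}$; since $\bigcup_{A\in\mathcal{A}_{\beta,\gamma}}A\supseteq Cl_{\beta,\gamma}$ this gives $|E_{\beta,\gamma}|=|\mathcal{A}_{\beta,\gamma}|$ and identifies $E_{\beta,\gamma}$ as exactly the elements of $E$ meeting $\mathcal{A}_{\beta,\gamma}$.

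Next I would carry out the counting step. An element of $Cl_{\beta,\gamma}$ is determined by a choice of a $2$-cycle and a $3$-cycle on the remaining $5$ points $\{1,2,3,4,5\}$ that are disjoint from each other, so $|Cl_{\beta,\gamma}|$ is the number of such pairs: there are $\binom{5}{2}=10$ choices for the $2$-cycle and then $2$ choices for the $3$-cycle on the remaining $3$ points, giving $|Cl_{\beta,\gamma}|=20$. If $\Omega\subseteq Cl_{\beta,\gamma}$ is a set of pairwise-commuting permutations, then by Lemma~\ref{l: brown} (applied to any $f$ of cycle type $\cycletype{2-3-8-(n-16)}$, which the hypotheses on $n$ permit) the supports $\{1,\dots,5\}$-parts of the members of $\Omega$ must be pairwise equal, forcing $|\Omega|\leq 1$ — indeed two distinct disjoint $\{2\text{-cycle},3\text{-cycle}\}$ pairs on $5$ points never commute. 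Hence every $A\in\mathcal{A}_{\beta,\gamma}$ contains at most one element of $Cl_{\beta,\gamma}$, so $|\mathcal{A}_{\beta,\gamma}|\geq 20$, giving $|E_{\beta,\gamma}|\geq 20$. Conversely, the collection $\mathcal{B}$ of abelian groups $\langle a,b,\beta,\gamma\rangle$ with $a$ a $2$-cycle and $b$ a $3$-cycle disjoint from each other on $\{1,\dots,5\}$ is an abelian cover of $Cl_{\beta,\gamma}$ of size $20$, whence $|E_{\beta,\gamma}|\leq 20$ and therefore $|E_{\beta,\gamma}|=|\mathcal{A}_{\beta,\gamma}|=20$, and $\mathcal{A}_{\beta,\gamma}=\mathcal{B}$ after the usual argument that each maximal abelian subgroup containing a given $g=ab\beta\gamma$ of the right type and meeting $Cl_{\beta,\gamma}$ in the maximal possible number of elements must be the one obtained by adjoining nothing new on $\{1,\dots,5\}$ — here the two $5$-point pieces $a$ and $b$ already exhaust $\{1,\dots,5\}$, so $\langle a,b,\beta,\gamma\rangle$ is already maximal abelian.

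The final, combinatorial step is to exhibit a contradiction: I would list an explicit family of groups $A_1,\dots,A_m\in\mathcal{B}$, let $a_i$ be the unique element of $A_i\cap E$, normalise $a_1$ by symmetry, and then show that the non-commuting constraints propagate to force $a_2,\dots,a_{m-1}$ into specific forms (up to the irrelevant choices of exponents on $\beta$ and $\gamma$, which do not affect which elements of $Cl_{\beta,\gamma}$ commute with a given one), until no admissible choice of $a_m$ remains because any candidate commutes with one of the previously fixed $a_i$. This is the direct analogue of the nine-group argument at the end of the proof of Proposition~\ref{p: alt even}; since the relevant commuting relation depends only on the pair (2-cycle, 3-cycle) on a fixed $5$-point set, the whole obstruction lives inside the action on $\{1,\dots,5\}$ and so the list of groups and the bookkeeping can be taken essentially verbatim from the even case (with $(7,8)$-type cycles replaced appropriately and $\gamma$ carrying the extra $8$-cycle $\beta$ alongside it).

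I expect the main obstacle to be the last step: verifying that the explicit list of groups genuinely forces a contradiction requires checking the propagation of the non-commuting conditions carefully, and one must make sure the chosen list of nine (or however many) subgroups of $\mathcal{B}$ really does close off every remaining option for the final $a_m$; the counting estimates in the previous paragraph are routine once the right fixed data $(\beta,\gamma)$ is chosen, and Lemmas~\ref{l: alt odd} and~\ref{l: odd powers not needed} do the heavy lifting for the reduction to cycle type $\cycletype{2-3-8-(n-16)}$.
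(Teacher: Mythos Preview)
There is a genuine gap: you have miscounted the points left over after fixing the $8$-cycle $\beta$ and the $(n-16)$-cycle $\gamma$. Since $8+(n-16)=n-8$, the complement of the supports of $\beta$ and $\gamma$ has $8$ points, not $5$; a permutation of cycle type $\cycletype{2-3-8-(n-16)}$ has three fixed points (compare $\tau=(4,5)(6,7,8)(9,\ldots,16)(17,\ldots,n)$, which fixes $1,2,3$). Consequently $|Cl_{\beta,\gamma}|$ is $\binom{8}{2}\cdot\binom{6}{3}\cdot 2=1120$, not $20$, and your claim that two distinct elements of $Cl_{\beta,\gamma}$ can never commute is false: for instance $(1,2)(3,4,5)\beta\gamma$ and $(1,2)(6,7,8)\beta\gamma$ commute. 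The correct bound on a commuting subset of $Cl_{\beta,\gamma}$ is $4$, giving $|\mathcal{A}_{\beta,\gamma}|\geq 280$, and the matching cover $\mathcal{B}$ must be built from groups $\langle a,b,c,\beta,\gamma\rangle$ with $a$ a $2$-cycle and $b,c$ \emph{two} disjoint $3$-cycles on the eight leftover points --- exactly as in the even case. Your version of $\mathcal{B}$ (with only one $3$-cycle on a $5$-point set) is not even well defined here, and on a genuine $5$-point set the argument would collapse to the trivial equality $\delta=\Delta$ for that slice, yielding no contradiction at all.

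The overall strategy you describe is right, and indeed the paper's proof proceeds exactly by reducing to the same $8$-point combinatorics as in Proposition~\ref{p: alt even}: after the counting step one lists the same nine subgroups on $\{1,\ldots,8\}$ (now with $\beta\gamma$ appended in place of the single long cycle) and derives the identical contradiction. So your instinct that ``the list of groups and the bookkeeping can be taken essentially verbatim from the even case'' is correct --- but for the reason that the leftover set has $8$ points, not because the obstruction lives on a $5$-point set. Fix the arithmetic and the rest of your outline goes through.
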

\begin{proof}
We do essentially the same thing as in the even case. Assume $\delta(A_n)= \Delta(A_n)$. Let $E$ a non commuting subset of size $\delta(A_n)$ such that $N_a(A_n) \subset E$. Let $Cl_{\gamma , \theta}$ be the set of permutations of cycle type $\cycletype{2-3-8-(n-16)}$ where the $8$-cycle is a given cycle $\gamma$ and the $\cycletype{(n-16)}$-cycle is a given cycle $\theta$, with the support of $\gamma$ and $\theta$ disjoint from $ \lbrace 1,2,3,4,5,6,7,8 \rbrace$. Define
\begin{align*}
\mathcal{A}_{\gamma , \theta} &:=\{ A\in \mathcal{A} \mid h\in A \textrm{ for some } h\in Cl_{\gamma , \theta} \};\\
E_{\gamma , \theta} &:= E \cap Cl_ {\gamma , \theta }.
\end{align*}
Again we see that to cover $Cl_{\gamma , \theta}$ we need at least $280$ abelian groups, so that $|E_{\gamma, \theta}|=|A_{\gamma, \theta}| \geq 280$. By Lemma~\ref{l: alt odd} these permutations will satisfy condition (2) of said lemma. By Lemma ~\ref{l: odd powers not needed} we can take these elements to be of the type $\cycletype{2-3-8-(n-16)}$.
 
Now consider $\mathcal{B}$ to be the set of groups generated by five disjoint cycles $a,b,c,\gamma,\theta$ where $a$ is a $2$-cycle, $b$ and $c$ are $3$-cycles, and $\gamma$, $\theta$ as above. This is a cover of $Cl_{\gamma, \theta}$ by $280$ abelian groups and so each one must contain exactly one element of $E_{\gamma, \theta}$.

Now we assume that $(1,2,3)(7,8)\gamma \theta $ is in $E_{\gamma, \theta}$ and consider in order the representatives for 
\begin{center}
 \begin{tabular}{cc}
 $\langle(1,2,3),(4,5,6),(7,8),\gamma,\theta\rangle$; & $\langle(1,2,3),(5,7,8),(4,6),\gamma,\theta\rangle$; \\
 $\langle(5,7,8),(2,4,6),(1,3),\gamma,\theta\rangle$; & $\langle(2,4,6),(1,3,5),(7,8),\gamma,\theta\rangle$; \\
 $\langle(1,3,5),(6,7,8),(2,4),\gamma,\theta\rangle$; & $\langle(1,2,3),(4,7,8),(5,6),\gamma,\theta\rangle$; \\
 $\langle(4,7,8),(1,5,6),(2,3),\gamma,\theta\rangle$; & $\langle(2,3,4),(1,5,6),(7,8),\gamma,\theta\rangle$.
 \end{tabular}
\end{center}
As before, we may assume that the following elements are in $E_{\gamma,\theta}$:
\begin{center}
 \begin{tabular}{cc}
 $(5,7,8)(4,6)\gamma \theta$; & $(2,4,6)(1,3)\gamma \theta$; \\
 $(1,3,5)(7,8)\gamma \theta$; & $(6,7,8)(2,4)\gamma \theta$; \\
 $(4,7,8)(5,6)\gamma \theta$; & $(1,5,6)(2,3)\gamma \theta$; \\
 $(2,3,4)(7,8) \gamma \theta$.
 \end{tabular}
\end{center}
 Now we must choose a representative for $\langle(2,3,4),(6,7,8),(1,5),\gamma ,\theta\rangle$, but we can not do this, since any element we choose will commute either with $(2,3,4)(7,8)\gamma \theta$ or $(6,7,8)(2,4)\gamma \theta$ both of which are already in $E_{\gamma, \theta}$, this contradiction implies that $\delta(A_n) \neq \Delta(A_n)$.
\end{proof}
 
\subsection{Small \texorpdfstring{$n$}{n}}

To complete our understanding of the situation in alternating groups, we need to consider the cases $n\leq 11$ and $n=13,14,15, 19$. We will not give full information here -- to prove Theorem~\ref{t: alternating} it is sufficient to prove that $\delta(A_n)=\Delta(A_n)$ for $n\leq 11$ and $n=15$.

\begin{lem}\label{l: alt abelian centralizer}
Let $n$ be a positive integer. Then every element of $A_n$ lies in an abelian centralizer if and only if $n\leq 7$ or $n=10$.

Moreover if $n=8$ or $9$, then the only elements of $A_n$ that do not lie in an abelian centralizer have cycle type $\cycletype{4-4}$. Similarly, the only elements of $A_{11}$ that do not lie in an abelian centralizer have cycle type $\cycletype{4-4-3}$.
\end{lem}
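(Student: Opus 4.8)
The plan is to reduce everything to a cycle-type criterion for abelianness of a centralizer in $A_n$, and then to run a finite but substantial case analysis. First I would note that $g\in C_{A_n}(h)$ if and only if $h\in C_{A_n}(g)$, that the permutation group $C_{A_n}(g)=C_{S_n}(g)\cap A_n$ is determined up to $S_n$-conjugacy by the cycle type of $g$ (hence so is the set of cycle types of its elements), and that abelianness of $C_{A_n}(h)$ depends only on the cycle type of $h$; thus ``$g$ lies in an abelian centralizer'' is equivalent to ``some $h\in C_{A_n}(g)$ has $C_{A_n}(h)$ abelian'', a property of the cycle type of $g$ alone, to be settled type by type. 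Writing $a_i$ for the number of $i$-cycles of $h$, one has $C_{S_n}(h)\cong\prod_{i\geq1}\bigl(\mathbb{Z}_i\wr S_{a_i}\bigr)$, abelian exactly when $a_1\leq2$ and $a_i\leq1$ for $i\geq2$, and $C_{A_n}(h)=C_{S_n}(h)\cap A_n$ has index $1$ or $2$. Tracking signs---a power of an $i$-cycle is odd iff $i$ is even and the exponent is odd, a transposition of two $i$-blocks is a product of $i$ transpositions (odd iff $i$ is odd), a transposition of two fixed points is always odd---I would prove that $C_{A_n}(h)$ is abelian if and only if one of the following holds: (i) $h$ has at most $2$ fixed points and pairwise distinct nontrivial cycle lengths; (ii) $h$ has exactly $3$ fixed points and pairwise distinct, odd, nontrivial cycle lengths; or (iii) $h$ has at most $1$ fixed point, exactly one nontrivial cycle length is repeated---occurring exactly twice and being odd or equal to $2$---and the remaining nontrivial cycle lengths are pairwise distinct and odd. (The length-$2$ clause in (iii) reflects the fact that $\mathbb{Z}_2\wr S_2$ meets $A_n$ in a Klein four-group; one must also rule out ``correlated odd'' elements arising when two or more wreath factors carry the sign map nontrivially.)

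The ``if'' direction, together with the easy halves of the ``moreover'' clauses, then becomes a finite verification: for $n\leq7$, for $n=10$, and for every cycle type of $A_8$, $A_9$, $A_{11}$ other than the three claimed exceptions, I would exhibit, for a representative $g$, a commuting $h$ whose cycle type satisfies (i), (ii), or (iii). The permissible moves are \emph{merging} $a_i$ equal-length cycles into one cycle of length $ia_i$ (or into several equal longer cycles, realised by an element of $\mathbb{Z}_i\wr S_{a_i}\leq C_{S_n}(g)$, with a partial merge available when $a_i\geq3$), \emph{absorbing} fixed points into a new cycle supported on them, and inserting a transposition of two fixed points when parity requires it, always keeping $h$ even. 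Using Lemma~\ref{l: brown} (or the wreath description of centralizers) to read off cycle types, it is routine, if laborious, to check that for each relevant type some such $h$ exists.

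For the ``only if'' direction and the exceptional types one shows instead that no admissible $h$ exists. If $g$ has cycle type $\cycletype{4-4}$ in $A_8$ or $A_9$, then $C_{A_n}(g)$ lies in $\mathbb{Z}_4\wr S_2$ (times a trivial factor), and its elements have cycle type in $\{1^n,\,2^21^{n-4},\,2^41^{n-8},\,4^21^{n-8}\}$; none of these satisfies (i)--(iii), the decisive obstruction being that merging the two $4$-cycles produces an $8$-cycle---an odd permutation, never lying in $A_n$---while at most one free fixed point is available for parity repair. The type $\cycletype{4-4-3}$ in $A_{11}$ is handled the same way: no fixed points, and the $3$-cycle can neither be merged nor used to fix parity. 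Finally, for every $n\geq12$ one needs only a single bad element, and one may take $g$ of cycle type $\cycletype{2-3-(n-8)}$ when $n$ is even and $\cycletype{4-4-(n-8)}$ when $n$ is odd: in the odd case merging the two $4$-cycles gives type $\cycletype{8-(n-8)}$, which is odd because $n-8$ is odd, so this escape is blocked, and with no fixed points and an unmergeable long cycle the enumeration yields no $h$ satisfying the criterion; the even case is similar (the two ``long'' cycles have lengths $3$ and $n-8$, the first already arising in a pair the moment one tries to absorb the three fixed points). Alternatively, for those $n\geq12$ covered by them, Propositions~\ref{p: alt even} and~\ref{p: alt odd} together with Lemma~\ref{l: leftover} force $Y_b(A_n)\neq\emptyset$ directly.

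The main obstacle is twofold: pinning down the abelian-centralizer criterion precisely---the interaction of the iterated wreath structure with the sign homomorphism has several subcases, length $2$ being the delicate one---and, in the exceptional cases, being genuinely exhaustive about the cycle types occurring inside $C_{A_n}(g)$. The unifying idea, encoded in clause (iii), is a parity obstruction: merging two equal even-length cycles of length $\geq4$ is an odd operation, ``available inside $A_n$'' only when a compensating odd feature is present---a second even cycle, two spare fixed points, or a third equal cycle to merge partially---and $\cycletype{4-4}$ (in $A_8,A_9$) and $\cycletype{4-4-3}$ (in $A_{11}$) are precisely the smallest types in which no such compensation exists.
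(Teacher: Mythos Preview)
Your approach is correct and substantially more detailed than the paper's. The paper's proof is essentially ``one can check directly'' for $n\leq 11$ and $n\in\{8,9,11\}$, and for $n\geq 12$ it merely names the witness types $\cycletype{4-4-(n-9)}$ (even $n$) and $\cycletype{4-4-(n-8)}$ (odd $n$) and again says ``one can check''. No structural explanation is offered.

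You instead extract a clean necessary-and-sufficient criterion for $C_{A_n}(h)$ to be abelian, by analysing the sign homomorphism on $C_{S_n}(h)\cong\prod_i(\mathbb{Z}_i\wr S_{a_i})$. Your three clauses (i)--(iii) are exactly right: the key observation---that if some factor $W_j$ is nonabelian but $\ker(\epsilon_j|_{W_j})$ is abelian, then any \emph{other} factor on which sign is nontrivial reintroduces noncommuting ``correlated'' elements---is precisely what forces the oddness conditions in (ii) and (iii) and the ``at most one fixed point'' in (iii). This gives a conceptual reason why $\cycletype{4-4}$ and $\cycletype{4-4-3}$ are the unique obstructions in $A_8,A_9,A_{11}$: merging two $4$-cycles would yield an $8$-cycle, which is odd, and there is no spare even feature to absorb the sign. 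Your witness $\cycletype{2-3-(n-8)}$ for even $n\geq 12$ is different from the paper's $\cycletype{4-4-(n-9)}$ but equally valid; indeed the enumeration of cycle types inside $C_{A_n}(g)$ shows that none meets (i)--(iii), the decisive point being that $n-8$ is even so it can never appear among the ``remaining odd'' lengths in (iii). One small caution: your ``alternatively'' via Propositions~\ref{p: alt even} and~\ref{p: alt odd} is not circular (those propositions do not invoke this lemma), but Proposition~\ref{p: alt odd} only covers odd $n\geq 21$, so you still need the direct argument for odd $n\in\{13,15,17,19\}$---which you give.

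What your route buys is an explanation rather than a verification; what the paper's route buys is brevity, since the case analysis you describe as ``routine, if laborious'' is genuinely laborious to write out in full.
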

\begin{proof}
Suppose that $n\leq 7$ or that $n=10$. One can check directly that, for all $g\in A_n$, there exists $h\in A_n$ such that $g\in C_{A_n}(h)$ and $C_{A_n}(h)$ is abelian. The statements for $n=8,9$ and $11$ can also be checked by direct computation.

Suppose now that $n\geq 12$. One can check that if $n$ is even (resp. odd), then elements of type $\cycletype{4-4-(n-9)}$ (resp. $\cycletype{4-4-(n-8)}$) do not lie in an abelian centralizer in $A_n$.
\end{proof}

In the notation of \textsection\ref{s: background}, the lemma asserts that, for $n\leq 7$ and $n=10$, we have $Y_a(A_n)=A_n$. Now Lemma~\ref{l: Ya} immediately yields the following corollary.

\begin{cor}\label{c: alt abelian centralizer}
If $n\leq 7$ or $n=10$, then $\delta(A_n)=\Delta(A_n)$.
\end{cor}

The following lemmas will help us deal with some of the remaining cases.

\begin{lem}\label{2k-2k-l elements}
Let $n$ be a positive integer with $n\geq 8$, and let $k$ and $\ell$ be positive integers such that $n-(4k+ \ell)<\min(\ell,2k)$ and $\ell\neq 2k$, $\ell \neq 4k$. Let $\sigma, \tau \in S_n$ of cycle type $\cycletype{2k-2k-\ell}$ such that $\sigma \tau=\tau \sigma$. Then,
\begin{enumerate}
\item  The $\ell$-cycle of $\tau$ is a power of the $\ell$-cycle of $\sigma$. 
\item  The supports of $\sigma$ and $\tau$ are equal.
\end{enumerate}   
\end{lem}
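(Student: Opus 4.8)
The plan is to analyze the cycle structure forced by the hypotheses and then invoke Lemma~\ref{l: brown} in a decisive way. First I would observe that the numerical condition $n-(4k+\ell)<\min(\ell,2k)$ means that outside the support of $\sigma$ there are fewer than $\ell$ and fewer than $2k$ points available; this is the key to pinning down how $\tau$ can sit relative to $\sigma$.

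The first step is to establish part (2), the equality of supports, from which part (1) will follow quickly. Write $\sigma=\sigma_1\sigma_2\sigma_3$ as a product of disjoint cycles, where $\sigma_1,\sigma_2$ are the two $2k$-cycles and $\sigma_3$ is the $\ell$-cycle; since $\ell\neq 2k$, the lengths are not all equal, so Lemma~\ref{l: brown} (applied after merging the equal-length cycles appropriately, or by a direct elementary argument) tells us that any $\tau$ commuting with $\sigma$ permutes the cycles of $\sigma$ of a given length among themselves and, on the orbits, acts as a product of powers of those cycles composed with something disjoint from $\supp(\sigma)$. The cleanest route is: $\tau$ preserves the partition of $\supp(\sigma)$ into $\sigma$-orbits (possibly swapping the two $2k$-orbits), and on $\{1,\dots,n\}\setminus\supp(\sigma)$ together with any orbit it fails to preserve, $\tau$ would have to move points off $\supp(\sigma)$. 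Now count: if $\tau$ moved the $\ell$-cycle's support partly or wholly off $\supp(\sigma)$, it would need at least $\ell$ points, but only $n-(4k+\ell)<\ell$ are available outside; similarly the $2k$-cycles of $\tau$ cannot escape $\supp(\sigma)$ because $n-(4k+\ell)<2k$. Hence every cycle of $\tau$ has its support inside $\supp(\sigma)$, and since $\tau$ also has cycle type $\cycletype{2k-2k-\ell}$, i.e.\ support of size $4k+\ell=|\supp(\sigma)|$, we get $\supp(\tau)=\supp(\sigma)$, proving (2).

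For part (1): by (2) and the commuting condition, $\tau$ restricted to $\supp(\sigma)$ commutes with $\sigma$ restricted to $\supp(\sigma)$, and $\tau$ permutes the $\sigma$-orbits. The $\ell$-orbit of $\sigma$ is the unique orbit of its length (here we use $\ell\neq 2k$), so $\tau$ must fix it setwise; a permutation of an $\ell$-point set commuting with an $\ell$-cycle on that set and fixing it setwise is exactly a power of that $\ell$-cycle, which is the claim. The condition $\ell\neq 4k$ is presumably needed to rule out a degenerate merging (so that a $2k$-cycle's square cannot be confused with the $\ell$-cycle, or to keep the orbit-length bookkeeping honest when applying the form of Lemma~\ref{l: brown}); I would double-check exactly where it is used and cite it at that point.

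The main obstacle I anticipate is making the application of Lemma~\ref{l: brown} rigorous when $\sigma$ has two cycles of the same length $2k$: the lemma as stated assumes no two cycles share a length. The fix is standard but needs care — either argue directly that a commuting permutation must permute the equal-length cycles and reduce to the distinct-length case on the complement, or apply Lemma~\ref{l: brown} to a well-chosen power of $\sigma$ whose nontrivial cycles do have distinct lengths. Once that reduction is in place, the counting argument using $n-(4k+\ell)<\min(\ell,2k)$ is routine, and the rest is bookkeeping about centralizers of single cycles.
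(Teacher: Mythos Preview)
Your outline is close to the paper's argument, but there is one real gap and one unnecessary worry.

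The gap is in your part~(1). You argue that $\tau$ fixes the $\ell$-orbit $L_\sigma$ of $\sigma$ setwise (correct, using $\ell\neq 2k$) and hence $\tau|_{L_\sigma}$ is a power of the $\ell$-cycle $\sigma_3$. But a power of $\sigma_3$ need not be an $\ell$-cycle: it could split $L_\sigma$ into shorter $\tau$-cycles. To finish you must show that $\tau|_{L_\sigma}$ is itself a single $\ell$-cycle, i.e.\ that it is the $\ell$-cycle of $\tau$. This is exactly where $\ell\neq 4k$ is used. Since $\supp(\tau)=\supp(\sigma)$ (your part~(2)), every cycle of $\tau|_{L_\sigma}$ is a genuine cycle of $\tau$ and so has length $1$, $2k$, or $\ell$; length $1$ is impossible because $L_\sigma\subseteq\supp(\tau)$, and length $2k$ would force $2k\mid\ell$ with at most two $2k$-cycles available, giving $\ell=2k$ or $\ell=4k$. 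The paper carries out precisely this case analysis (in a slightly different order: it proves (1) first by this orbit-size trichotomy, then (2) separately).

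Your worry about Lemma~\ref{l: brown} failing because $\sigma$ has two cycles of equal length is misplaced. Neither you nor the paper needs that lemma applied to $\sigma$ as a whole. The only facts required are (i) commuting permutations permute each other's orbits, which is elementary and needs no distinct-length hypothesis, and (ii) the centralizer of a single $\ell$-cycle in the symmetric group on its support is the cyclic group it generates (this is where the paper invokes Lemma~\ref{l: brown}, applied to a single cycle). So drop the proposed ``fix'' via powers of $\sigma$; it is not needed.
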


\begin{proof}
For the first part, let $r \in \{1,2, \ldots , n\}$, we consider the orbit of $r$ under $\langle \sigma\rangle$, this is $Orb_{\sigma}(r)=\{\sigma^i(r):i\in \mathbb{Z}\}$. Suppose that $|Orb_{\sigma}(r)|=\ell$. We have that $\sigma^i\tau^j(r)=\tau^j\sigma^i(r)$ for all $i,j \in \mathbb{Z}$, and  so
\[Orb_{\sigma}(\tau^j(r))=\tau^j\left(Orb_{\sigma}(r)\right),\]
which implies that
\[Orb_{\sigma}(\tau^j(r))=Orb_{\sigma}(r),\]
because there is only one orbit by $\sigma$ of size $\ell$. We conclude that $\tau^j(r)\in Orb_{\sigma}(r)$  for all $j \in \mathbb{Z}$, and then $Orb_{\tau}(r)\subseteq Orb_{\sigma}(r)$. 

Since $\tau$ has cycle type $\cycletype{2k-2k-\ell}$ there are three possibilities: $|Orb_{\tau}(r)|=\ell$, $|Orb_{\tau}(r)|=1$ and $|Orb_{\tau}(r)|=2k$. In the first case clearly $Orb_{\tau}(r)=Orb_{\sigma}(r)$, so that both $\ell$-cycles have the same support. Lemma~\ref{l: brown} then tells us that one $\ell$-cycle must be the power of the other.

Suppose one of the other possibilities holds: then if $|Orb_{\tau}(r)=1|$ we have that $\tau(r)=r$, then $\tau \sigma^j(r)=\sigma^j\tau(r)=\sigma^j(r)$, so $\sigma^j(r)$ is fixed by $\tau$ for all $j=1,2,3,\ldots, \ell$, i.e. $\tau$ has $\ell$ fixed points. But this is impossible, because $n-(4k+\ell)<\ell$. 

This means that we must have $|Orb_{\tau}(r)|=2k$ for all $r$'s in the support of $\sigma$'s $\ell$-cycle. Since $\tau$ has cycle type $\cycletype{2k-2k-\ell}$ the support of $\sigma$'s $\ell$-cycle must be the same as the support for one of the $2k$-cycles of $\sigma$, or the  union of the supports of the two $2k$-cycles of $\sigma$. So we have only two possibilities: $\ell=2k$ or $\ell=4k$, neither one possible by hypothesis. This ends the proof of the first part.

For the second part, let $r \in \{1,2,3,\ldots , n\}$, and suppose (for a contradiction) that $\tau(r)=r$ and  $\sigma(r)\neq r$. This implies that $Orb_{\sigma}(r)$ must have $2k$ or $\ell$ elements. But now observe that, for all $i$,
\[
 \tau\sigma^i(r) = \sigma^i\tau(r) = \sigma^i(r)
\]
and we conclude that every element in $Orb_{\sigma}(r)$ is fixed by $\tau$. But this is impossible, because $\tau$ only fixes $n-(4k+\ell)$ elements. We conclude that every element fixed by $\tau$ is fixed by $\sigma$. The same reasoning yields that every element fixed by $\sigma$ is fixed by $\tau$ and the result holds.
\end{proof}

\begin{lem}\label{l: double even class}
Let $n$ a positive integer with $n\geq 8$, and suppose that $n=4k$ for some integer $k$. Let $\Lambda$ be the conjugacy class of elements of cycle type $\cycletype{2k-2k}$ in $A_n$. Then the commutator graph $\Gamma(\Lambda)$ is disconnected. Furthermore if $\Omega$ is (the vertex set of) a connected component of $\Gamma(\Lambda)$, then $\Omega\subset H < S_n$, with $H\cong S_k \wr K_4$, where $K_4$ is the normal Klein $4$-subgroup of $S_4$.
\end{lem}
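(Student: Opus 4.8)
The plan is to attach to each $\sigma\in\Lambda$ a partition $P(\sigma)$ of $\{1,\dots,n\}$ into four blocks of size $k$, to prove that $P$ is constant on each connected component of $\Gamma(\Lambda)$, and then to deduce both assertions. Since $\sigma$ is a product of two disjoint $2k$-cycles with full support and $2k\geq 4$, the permutation $\sigma^{2}$ has cycle type $\cycletype{k-k-k-k}$ and also full support; let $P(\sigma)$ be the partition of $\{1,\dots,n\}$ into the four orbits of $\langle\sigma^{2}\rangle$. Because the two orbits of $\sigma^{2}$ lying inside a given $2k$-cycle of $\sigma$ are interchanged by $\sigma$, the permutation $\sigma$ acts on the four blocks of $P(\sigma)$ as a product of two disjoint transpositions, i.e.\ through the Klein four-subgroup $K_{4}\leq S_{4}$. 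Hence $\sigma$ lies in the subgroup $H=H(P(\sigma))$ of $\Stab_{S_{n}}(P(\sigma))\cong S_{k}\wr S_{4}$ consisting of those permutations whose induced action on the four blocks lies in $K_{4}$; and $H\cong S_{k}\wr K_{4}$, a proper subgroup of $S_n$.

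The key claim is: if $\sigma,\tau\in\Lambda$ and $\sigma\tau=\tau\sigma$, then $P(\sigma)=P(\tau)$. Write $\sigma=\sigma_{1}\sigma_{2}$ with $\sigma_{i}$ a $2k$-cycle supported on $A_{i}$, so $\{A_{1},A_{2}\}$ partitions $\{1,\dots,n\}$ into two sets of size $2k$; as $\tau$ commutes with $\sigma$ it permutes $\{A_{1},A_{2}\}$. If $\tau$ fixes each $A_{i}$ setwise, then $\tau|_{A_{i}}$ commutes with the $2k$-cycle $\sigma_{i}$ and so is a power $\sigma_{i}^{\,j_{i}}$ of it; since $\tau$ has no fixed points and cycle type $\cycletype{2k-2k}$, each $\sigma_{i}^{\,j_{i}}$ must be a single $2k$-cycle, forcing $\gcd(j_{i},2k)=1$, whence $\langle\tau^{2}|_{A_{i}}\rangle=\langle\sigma_{i}^{\,2j_{i}}\rangle=\langle\sigma_{i}^{2}\rangle$ and $\tau^{2},\sigma^{2}$ have the same orbits. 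If instead $\tau$ interchanges $A_{1}$ and $A_{2}$, then each $2k$-cycle of $\tau$ alternates between $A_{1}$ and $A_{2}$ and so meets $A_{1}$ in exactly $k$ points; writing the two such intersections as $C_{1},C_{2}$ (a partition of $A_{1}$), a direct check shows $\tau^{2}|_{A_{1}}$ is the product of a $k$-cycle on $C_{1}$ with a $k$-cycle on $C_{2}$. As before $\tau^{2}$ commutes with $\sigma$ and fixes $A_1$ setwise, so $\tau^{2}|_{A_{1}}$ is a power of $\sigma_{1}$, now of cycle type $\cycletype{k-k}$; hence $\langle\tau^{2}|_{A_{1}}\rangle=\langle\sigma_{1}^{2}\rangle$ and $\{C_{1},C_{2}\}$ is exactly the pair of $\sigma^{2}$-orbits inside $A_{1}$, and symmetrically for $A_{2}$. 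In either case $P(\tau)=P(\sigma)$.

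Granting the claim, $P$ takes a single value $P_{0}$ on each connected component $\Omega$ of $\Gamma(\Lambda)$, and then $\Omega\subseteq\{g\in\Lambda : P(g)=P_{0}\}\subseteq H(P_{0})\cong S_{k}\wr K_{4}$ by the first paragraph, which is the second assertion. For disconnectedness, observe that since $n=4k$ with $k\geq 2$ there is more than one partition of $\{1,\dots,n\}$ into four blocks of size $k$, and every such partition $\{B_{1},B_{2},B_{3},B_{4}\}$ equals $P(\sigma)$ for a suitable $\sigma\in\Lambda$: take $\sigma$ to be the product of the $2k$-cycle that interleaves the points of $B_{1}$ and $B_{2}$ with the $2k$-cycle that interleaves the points of $B_{3}$ and $B_{4}$, so that the orbits of $\sigma^{2}$ are exactly $B_{1},B_{2},B_{3},B_{4}$. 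Two elements of $\Lambda$ with different associated partitions then lie in different components, so $\Gamma(\Lambda)$ is disconnected.

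I expect the only real obstacle to be the second case of the key claim, where $\tau$ swaps $A_{1}$ and $A_{2}$: one has to verify cleanly that each $2k$-cycle of $\tau$ hits each of $A_{1},A_{2}$ in exactly $k$ points and that $\tau^{2}|_{A_{1}}$ is a $\cycletype{k-k}$-power of $\sigma_{1}$, together with the elementary arithmetic fact that $\langle\sigma_{i}^{m}\rangle=\langle\sigma_{i}^{2}\rangle$ precisely when $\gcd(m,2k)=2$. Everything else is routine bookkeeping.
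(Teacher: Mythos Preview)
Your proof is correct and follows essentially the same approach as the paper. The paper attaches to $g$ ``the unique partition of type $k\text{-}k\text{-}k\text{-}k$ on which $g$ acts as a double-transposition,'' which is exactly your $P(g)$ (the $\langle g^{2}\rangle$-orbits), and then asserts that any element of $\Lambda$ commuting with $g$ acts the same way on that partition, leaving this to ``direct computation''; your two-case analysis (according to whether $\tau$ fixes or swaps the supports $A_1,A_2$) is precisely that computation carried out in full, and your disconnectedness argument makes explicit what the paper leaves implicit.
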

\begin{proof}
Let $g=(1,2,\dots, 2k)(2k+1,2k+2,\dots, 4k)$ and consider the following partition of the set $\{1,2,\dots, n\}$ into four subsets of size $k$:
\begin{center}
\begin{tabular}{ll} 
$\{1,3,5,\dots, 2k-1\}$, \, & $\{2k+1, 2k+3, 2k+5, \dots, 4k-2\},$  \\
$\{2,4,6, \dots, 2k\}$,  & $\{2k+2, 2k+4, 2k+6, \dots, 4k\}.$
\end{tabular}
\end{center}
It is easy to see that this is the unique partition of type $\cycletype{k-k-k-k}$ on which $g$ acts as a double-transposition. What is more direct computation confirms that the same is true of any element of $\Lambda$ that commutes with $g$. Let $\Omega$ be the connected component of $\Gamma(\Lambda)$ that contains $g$. We conclude that $\Omega$ lies inside the subgroup generated by all elements of $\Lambda$ that act as double-transpositions on $\Lambda$. All such elements lie inside $H$ and we are done.
\end{proof}

\begin{cor}\label{2k+2k and l}
Let $n$ be a positive integer, $n\geq 8$, and let $k,\ell$ be positive integers such that $n-(4k+\ell)<\min(2k,\ell)$. Let $\Lambda$ be the conjugacy class of elements of cycle type $\cycletype{2k-2k-\ell}$ in $A_n$. Then the commutator graph $\Gamma(\Lambda)$ is disconnected. Furthermore if $\Omega$ is (the vertex set of) a connected component of $\Gamma(\Lambda)$, then $\Omega\subset H < S_n$ with $H\cong (S_k \wr K_4)\times \mathbb{Z}/\ell\mathbb{Z}$.
\end{cor}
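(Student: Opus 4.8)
The plan is to reduce the claim to Lemma~\ref{l: double even class} by ``peeling off'' the $\ell$-cycle. Fix $g\in\Lambda$ and write $g=g_0\gamma$, where $g_0$ has cycle type $\cycletype{2k-2k}$ and support $\Sigma$ (so $|\Sigma|=4k$), $\gamma$ is an $\ell$-cycle with support $T$ (so $|T|=\ell$), and $\Sigma\cap T=\emptyset$. Let $\Omega$ be the connected component of $\Gamma(\Lambda)$ containing $g$. The first step is to prove, by induction along a path in $\Omega$, that \emph{every} $h\in\Omega$ has support $\Sigma\cup T$, has the product of its two $2k$-cycles supported on $\Sigma$, and has its $\ell$-cycle generating $\langle\gamma\rangle$. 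For the inductive step one applies Lemma~\ref{2k-2k-l elements} to two consecutive (hence commuting) vertices of the path, which is legitimate because the hypothesis $n-(4k+\ell)<\min(2k,\ell)$ holds (here one also records the mild extra hypothesis $\ell\neq 2k,4k$ of that lemma, satisfied in every application we make of this corollary): part~(2) propagates the support $\Sigma\cup T$, while part~(1) propagates the cyclic group $\langle\gamma\rangle$, since an $\ell$-cycle that is a power of a generator of $\langle\gamma\rangle$ is again a generator of $\langle\gamma\rangle$. Consequently $\Omega\subseteq\Sym(\Sigma)\times\langle\gamma\rangle$, where $\Sym(\Sigma)\leq S_n$ denotes the pointwise stabiliser of the complement of $\Sigma$.

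The second step exploits the direct-product structure. Since $\langle\gamma\rangle$ is abelian and centralises $\Sym(\Sigma)$ (disjoint supports), writing $h=h_0\gamma^{a(h)}$ with $h_0\in\Sym(\Sigma)$ of cycle type $\cycletype{2k-2k}$ shows that $h,h'\in\Omega$ commute if and only if $h_0,h'_0$ commute. Hence the projection $h\mapsto h_0$ carries each path of $\Gamma(\Lambda)$ lying in $\Omega$ to a walk in the commuting graph of the conjugacy class of $\cycletype{2k-2k}$-elements of $\Sym(\Sigma)\cong S_{4k}$, so the image of $\Omega$ lies in a single connected component of that graph. If $k\geq 2$, Lemma~\ref{l: double even class} (applied with $n$ replaced by $4k$; note that the class of $\cycletype{2k-2k}$-elements does not split, so it is a single conjugacy class in both $S_{4k}$ and $A_{4k}$) places that component inside a subgroup $H_0\cong S_k\wr K_4$ of $\Sym(\Sigma)$, whence $\Omega\subseteq H_0\times\langle\gamma\rangle=:H\cong (S_k\wr K_4)\times\mathbb{Z}/\ell\mathbb{Z}$, a proper subgroup of $S_n$; this is the ``furthermore'' assertion. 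If $k=1$ every $\cycletype{2-2}$-element of $S_4$ lies in $K_4=S_1\wr K_4$, so the same conclusion is immediate (indeed $\Omega$ is then a clique).

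For disconnectedness, the first step shows that two elements of $\Lambda$ lie in distinct components of $\Gamma(\Lambda)$ whenever their supports differ, or whenever their $\ell$-cycles generate distinct cyclic subgroups; and when $k\geq 2$ one may alternatively invoke, via the projection of the second step, the disconnectedness already supplied by Lemma~\ref{l: double even class}. A short case-check shows that, under the hypotheses $n\geq 8$ and $n-(4k+\ell)<\min(2k,\ell)$ (together with $\ell\neq 2k,4k$), at least one of these alternatives yields two elements of $\Lambda$ in different components, so $\Gamma(\Lambda)$ is disconnected.

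I expect the first step to be the main obstacle: its content is that an \emph{entire} connected component -- not merely a closed neighbourhood of a single vertex -- is confined to $\Sym(\Sigma)\times\langle\gamma\rangle$. This is exactly where part~(1) of Lemma~\ref{2k-2k-l elements} (``the $\ell$-cycle of $\tau$ is a power of the $\ell$-cycle of $\sigma$'') is essential, since it is what lets the induction carry the common cyclic subgroup $\langle\gamma\rangle$ all the way along a path. Everything afterwards is the routine direct-product bookkeeping of the second step, together with the minor case analysis needed to certify disconnectedness and to dispatch $k=1$.
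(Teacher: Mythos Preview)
Your proposal is correct and follows essentially the same route as the paper: use Lemma~\ref{2k-2k-l elements} to pin down the support and the cyclic group $\langle\gamma\rangle$ along a connected component, then project onto the $\cycletype{2k-2k}$ part and invoke Lemma~\ref{l: double even class}. In fact you are more careful than the paper in three places: you make the induction along a path explicit (the paper applies Lemma~\ref{2k-2k-l elements} to arbitrary $\sigma,\tau\in\Omega$ as though they commuted), you flag the side conditions $\ell\neq 2k,4k$ needed for that lemma, and you treat the case $k=1$ separately since Lemma~\ref{l: double even class} requires $4k\geq 8$.
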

\begin{proof}
Let $\sigma, \tau \in \Omega$, the vertex set of a connected component of $\Gamma(\Lambda)$. By Lemma \ref{2k-2k-l elements} the $\ell$-cycle of $\tau$ is a power of the $\ell$-cycle of $\sigma$, and their $\cycletype{2k-2k}$ parts have the same support. By Lemma \ref{l: double even class} $\Omega \subset H$ with $H\cong (S_k\wr K_4) \times \mathbb{Z}/\ell\mathbb{Z}$. 
	\end{proof}

\begin{cor}\label{c: alt less 11}
If $n\leq 11$, then $\delta(A_n)=\Delta(A_n)$.
\end{cor}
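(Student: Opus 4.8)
The plan is to combine Corollary~\ref{c: alt abelian centralizer} with the structural lemmas just proved, handling only the residual values $n=8,9,11$ (the cases $n\leq 7$ and $n=10$ being immediate from Corollary~\ref{c: alt abelian centralizer}). By Lemma~\ref{l: leftover}, it suffices to show that $\delta(Y_b(A_n))=\Delta(Y_b(A_n))$, and by Lemma~\ref{l: alt abelian centralizer} the set $Y_b(A_n)$ consists entirely of a single conjugacy class: elements of cycle type $\cycletype{4-4}$ when $n=8,9$, and of cycle type $\cycletype{4-4-3}$ when $n=11$. So the whole problem reduces to computing the independence number and clique-covering number of the commuting graph $\Gamma(\Lambda)$ for $\Lambda$ this one class.

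First I would apply Lemma~\ref{l: double even class} (with $k=2$) for $n=8$, and Corollary~\ref{2k+2k and l} (with $k=2$, $\ell=3$) for $n=11$; for $n=9$ one observes that the fixed point plays no role, so $\Gamma(\Lambda)$ for $n=9$ is a disjoint union of copies of the $n=8$ graph, and independence/clique-cover numbers add over connected components. In each case the conclusion is that every connected component $\Omega$ of $\Gamma(\Lambda)$ lies inside a subgroup $H$ with $H\cong S_2\wr K_4$ (for $n=8,9$) or $H\cong (S_2\wr K_4)\times\mathbb{Z}/3\mathbb{Z}$ (for $n=11$), and these components are permuted transitively by $S_n$, so they are mutually isomorphic as graphs. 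Thus it is enough to analyze a single component: count the number $c$ of vertices in $\Omega$, find the largest clique size (equivalently the largest set of pairwise-commuting elements of $\Lambda$ inside $H$) — call it $m$ — and exhibit a non-commuting subset of $\Omega$ of size $\lceil c/m\rceil$ together with a cover of $\Omega$ by $\lceil c/m\rceil$ abelian subgroups, forcing $\delta(\Omega)=\Delta(\Omega)=\lceil c/m\rceil$. Summing over the (say $t$) components then gives $\delta(Y_b)=\Delta(Y_b)=t\lceil c/m\rceil$, and Lemma~\ref{l: leftover} finishes.

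The computations inside $H\cong S_2\wr K_4$ are entirely finite and small: $|H|=2^4\cdot 4=64$, the relevant elements of $\Lambda$ inside $H$ are exactly those acting as a double transposition on the four blocks while permuting the two points within each moved block, so there are $3\cdot 2^2=12$ of them (pick the pair of blocks swapped in $K_4$, then independently decide the two within-block transpositions on the two moved blocks); a maximal commuting subset of these has size $2$ (two such elements commute iff they induce the same double transposition of blocks, giving $3$ cliques of size $2$). Hence per component $\delta=\Delta=6$. I would record the explicit non-commuting $6$-set and the matching $6$ abelian subgroups (each generated by the four cycles $(a,b)$ within blocks together with, for $n=11$, the fixed $3$-cycle), and then multiply by the number of components: for $n=8$ there is $1$ partition-orbit but I must count how many components there are — $|\Lambda|/|\Omega|$ — and similarly for $n=9,11$. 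The one genuine obstacle is bookkeeping: making sure the candidate non-commuting set really is an independent set in $\Gamma(\Lambda)$ (no hidden commuting pair across the chosen representatives) and that the abelian cover is genuinely a cover of all of $\Omega$, not just of the class representatives — but this is a direct verification of the same flavour as, and strictly easier than, the argument already carried out in Proposition~\ref{p: alt even}. No step requires anything beyond the finite checks that Lemma~\ref{l: alt abelian centralizer} already invokes.
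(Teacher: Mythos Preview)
Your overall strategy coincides with the paper's: reduce via Corollary~\ref{c: alt abelian centralizer} and Lemma~\ref{l: leftover} to the single class $\Lambda$ of cycle type $\cycletype{4{-}4}$ (for $n=8,9$) or $\cycletype{4{-}4{-}3}$ (for $n=11$), invoke Lemma~\ref{l: double even class} or Corollary~\ref{2k+2k and l} to localise to one connected component $\Omega$ lying in $H\cong S_2\wr K_4$ (times $\mathbb{Z}/3\mathbb{Z}$ when $n=11$), and then carry out a finite check on $\Omega$. The reduction for $n=9$ via the unique fixed point is also exactly right.

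Where your plan breaks is the finite check itself. You assert that two $\cycletype{4{-}4}$ elements of $H$ commute if and only if they induce the same double transposition on the four blocks, and hence that the largest commuting subset has size $2$, giving $\delta(\Omega)=\Delta(\Omega)=6$. The ``only if'' direction is false. For example, with blocks $\{1,2\},\{3,4\},\{5,6\},\{7,8\}$ the elements
\[
g_1=(1,3,2,4)(5,7,6,8)\quad\text{and}\quad g_2=(1,5,2,6)(3,7,4,8)
\]
induce \emph{different} double transpositions of the blocks, yet conjugating gives
\[
g_1 g_2 g_1^{-1}=(g_1(1),g_1(5),g_1(2),g_1(6))(g_1(3),g_1(7),g_1(4),g_1(8))=(3,7,4,8)(2,6,1,5)=g_2,
\]
so they commute. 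The underlying reason is that $K_4$ is abelian, so in $S_2\wr K_4$ every commutator lands in the base group $(\mathbb{Z}/2\mathbb{Z})^4$ and frequently vanishes even for elements with distinct block images. In fact, for each of the three nontrivial block images the four lifts $(1,3,2,4)^{\pm 1}(5,7,6,8)^{\pm 1}$ (and analogously for the other two images) pairwise commute, so $\Omega$ is covered by three abelian subgroups and $\Delta(\Omega)\leq 3$. One then exhibits three pairwise non-commuting elements of $\Omega$, for instance
\[
(1,3,2,4)(5,7,6,8),\qquad (1,5,2,6)(3,8,4,7),\qquad (1,7,2,8)(3,5,4,6),
\]
to conclude $\delta(\Omega)=\Delta(\Omega)=3$. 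Your proposed non-commuting $6$-set does not exist (pigeonhole against the three-group cover forbids it), so carrying out the plan as written would stall; once $m$ is corrected from $2$ to $4$ your $\lceil c/m\rceil$ framework gives the right target and the rest goes through.
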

\begin{proof}
By Corollary~\ref{c: alt abelian centralizer}, we may (and we do) assume that $n=8$, $9$ or $11$. By Lemma~\ref{l: leftover}, we must prove that $\delta(Y_b(A_n))=\Delta(Y_b(A_n))$ in each case. By Lemma~\ref{l: alt abelian centralizer}, $Y_b(A_n)$ is equal to the conjugacy class of cycle type $\cycletype{4-4}$ for $n=8$ or $9$, and $\cycletype{4-4-3}$ for $n=11$.

Now, by Corollary~\ref{2k+2k and l}, the commutator graph of $Y_b(A_n)$ is disconnected and each connected component lies inside a subgroup of $S_n$ isomorphic to $S_2\wr K_4$ for $n=8$ or $n=9$, and $(S_2\wr K_4 )\times \mathbb{Z}/3\mathbb{Z}$ for $n=11$. Let $\Omega$ be such a connected component; clearly if we can show that $\delta(\Omega)=\Delta(\Omega)$, then we are done.

We consider first the cases $n=8$ and $n=9$, without loss of generality, we take $\Omega$ to be a connected component inside ``the'' wreath product $H=S_2\wr S_4$. Direct computation reveals that $H$ contains $12$ elements of cycle type $4-4$; they are the following elements and their inverses:

\begin{center}
\begin{tabular}{l l l}
 $(1,3,2,4)(5,7,6,8),$ & $(1,5,2,6)(3,7,4,8),$ & $(1,7,2,8)(3,5,4,6),$  
 \\
 $(1,3,2,4)(5,8,6,7),$ & $(1,5,2,6)(3,8,4,7),$ & $(1,7,2,8)(3,6,4,5).$
 \end{tabular} 
\end{center}

Now observe that the three elements in the first row form a non-commuting set, while the elements in each column generate an abelian group. Taking $\Omega$ to be this set of $12$ elements, we conclude that $\delta(\Omega)=\Delta(\Omega)=3$, and we are done.

For the case $n=11$, we observe that $H$ has $24$ elements of cycle type $4-4-3$ namely

\begin{center}
	\begin{tabular}{l l l}
		$(1,3,2,4)(5,7,6,8)(9,10,11),$ & $(1,5,2,6)(3,7,4,8)(9,10,11),$ & $(1,7,2,8)(3,5,4,6)(9,10,11),$  
		\\
		$(1,3,2,4)(5,8,6,7)(9,10,11),$ & $(1,5,2,6)(3,8,4,7)(9,10,11),$ & $(1,7,2,8)(3,6,4,5)(9,10,11),$
		\\
		$(1,3,2,4)(5,7,6,8)(9,11,10),$ & $(1,5,2,6)(3,7,4,8)(9,11,10),$ & $(1,7,2,8)(3,5,4,6)(9,11,10),$  
		\\
		$(1,3,2,4)(5,8,6,7)(9,11,10),$ & $(1,5,2,6)(3,8,4,7)(9,11,10),$ & $(1,7,2,8)(3,6,4,5)(9,11,10).$
		
	\end{tabular} 
\end{center}

and their inverses. Again, observe that the three elements in the first row form a non-commuting set, while the elements in each column generate an abelian group. Taking $\Omega$ to be this set of $24$ elements, we conclude that $\delta(\Omega)=\Delta(\Omega)=3$, and we are done.
\end{proof}

\begin{lem}\label{l: alt 15}
$\delta (A_{15})=\Delta (A_{15})$.
\end{lem}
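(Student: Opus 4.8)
The plan is to follow the same template as Corollary~\ref{c: alt less 11}, but the combinatorics is more involved because $A_{15}$ has \emph{several} conjugacy classes lying outside $Y_a(A_{15})$. So the first step is to identify $Y_b(A_{15})$ precisely: by the method of Lemma~\ref{l: alt abelian centralizer} (checking, via Lemma~\ref{l: brown}, which cycle types $g$ admit an $h$ with $g \in C_{A_{15}}(h)$ and $C_{A_{15}}(h)$ abelian) one determines exactly which classes have no abelian centralizer available. The expectation, given the shape of Lemma~\ref{l: alt abelian centralizer}, is that $Y_b(A_{15})$ consists of elements whose cycle type is built from a ``double block'' $\cycletype{2k-2k-\cdots}$ — concretely cycle types such as $\cycletype{4-4-3-3}$, $\cycletype{4-4-7}$, $\cycletype{2-2-\cdots}$-type classes, and similar — together with any class all of whose commuting partners force a non-abelian centralizer. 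One should enumerate these carefully (this is the kind of finite check the paper elsewhere does ``by direct computation''), since missing a class would invalidate the argument.

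By Lemma~\ref{l: leftover} it then suffices to prove $\delta(Y_b(A_{15})) = \Delta(Y_b(A_{15}))$. For this I would use Corollary~\ref{2k+2k and l} (and Lemma~\ref{l: double even class}) to show that the commuting graph restricted to each such class $\Lambda$ of type $\cycletype{2k-2k-\ell}$ (or its obvious analogue when there are two ``extra'' cycles, as in $\cycletype{4-4-3-3}$) is disconnected, with each connected component $\Omega$ contained in a subgroup $H < S_{15}$ of the form $(S_k \wr K_4) \times (\text{cyclic or small symmetric factor})$. The graph $\Gamma(Y_b(A_{15}))$ is then a disjoint union of such components (different classes give vertex sets that are genuinely separated, since distinct cycle types need to be checked not to commute across components — here one uses that an element of one type and an element of another type in $Y_b$ cannot commute, or handles the exceptions directly). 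It therefore suffices to establish $\delta(\Omega) = \Delta(\Omega)$ for each component $\Omega$.

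For an individual component the strategy mirrors the $n=8,9,11$ cases of Corollary~\ref{c: alt less 11}: inside the small group $H$ one lists explicitly the (few) elements of the given cycle type, exhibits a non-commuting subset of some size $m$, exhibits a partition of the $\Omega$-elements into $m$ abelian subgroups (columns that each generate an abelian group, as in that proof), and concludes $\delta(\Omega) = \Delta(\Omega) = m$. The key point making this work is that for $k=2$ the group $S_2 \wr K_4$ is small, so the element count in each component is small (on the order of $12$ or $24$, scaled by the cyclic factor), and the required clique cover is read off directly. One must also remember to re-glue: $\delta(Y_b) = \sum_\Omega \delta(\Omega)$ and $\Delta(Y_b) = \sum_\Omega \Delta(\Omega)$ because the components are disjoint and mutually non-commuting, so equality for each $\Omega$ gives equality for $Y_b$, and then Lemma~\ref{l: leftover} gives $\delta(A_{15}) = \Delta(A_{15})$.

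The main obstacle I anticipate is the bookkeeping in the first step: correctly pinning down \emph{every} cycle type in $Y_b(A_{15})$ and verifying that the corresponding restricted commuting graphs really do split into components small enough to handle by the explicit list-and-cover method. In particular one must be careful with cycle types having two repeated blocks (e.g. $\cycletype{4-4-3-3}$ or $\cycletype{6-6-3}$ versus $\cycletype{2-2-\cdots}$), where Corollary~\ref{2k+2k and l} does not apply verbatim and a small ad hoc variant of Lemma~\ref{2k-2k-l elements} is needed; and with any borderline class where $n-(4k+\ell)$ fails the strict inequality $< \min(2k,\ell)$, so the disconnectedness argument must be supplied separately. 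Once the list of components and their isomorphism types is in hand, the remaining verifications are short finite computations of exactly the type already carried out for $n \leq 11$.
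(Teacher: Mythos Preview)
Your plan matches the paper's proof exactly: reduce via Lemma~\ref{l: leftover} to $Y_b(A_{15})$, which turns out to consist of the four classes $\cycletype{7{-}4{-}4}$, $\cycletype{6{-}6{-}3}$, $\cycletype{6{-}4{-}2{-}2}$, $\cycletype{6{-}3{-}3{-}2}$ (not $\cycletype{4{-}4{-}3{-}3}$ or any $\cycletype{2{-}2{-}\cdots}$ type), verify these classes are pairwise non-commuting, and then handle each class component-by-component with explicit element lists and abelian covers as in Corollary~\ref{c: alt less 11}. Your anticipated obstacle is real and in fact dominates the work --- only $\cycletype{7{-}4{-}4}$ falls directly under Corollary~\ref{2k+2k and l}, while the other three require the ad hoc variants of Lemma~\ref{2k-2k-l elements} you describe, and the resulting components are larger than in the $n\leq 11$ cases (the $\cycletype{6{-}6{-}3}$ component has $432$ elements, covered by $36$ abelian subgroups).
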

\begin{proof}
Direct computation shows that $Y_b (A_{15})$ consists of the elements of cycle type $\cycletype{7-4-4}$, $\cycletype{6-6-3}$, $\cycletype{6-4-2-2}$, and $\cycletype{6-3-3-2}$. Notice that among these classes no two elements in different classes commute. Hence we may set $\Lambda$ to be each class in turn, and show that in eacy case $\delta(\Lambda)=\Delta(\Lambda)$.

{\bf Class $\cycletype{7-4-4}$}: In this case Corollary~\ref{2k+2k and l} implies that $\Gamma(\Lambda)$ is disconnected. Taking $\Omega$ to be the maximal connected component that contains $(1,3,2,4)(5,7,6,8)(9,10,11,12,13,14,15)$ and setting $\sigma = (9,10,11,12,13,14,15)$ one can check that this component has $72$ elements, as follows:
\begin{center}
\begin{tabular}{l l l}
 $(1,3,2,4)(5,7,6,8)\sigma ^i ,$ & $(1,5,2,6)(3,7,4,8)\sigma ^i ,$ & $(1,7,2,8)(3,5,4,6)\sigma ^i ,$  
 \\
 $(1,3,2,4)(5,8,6,7)\sigma ^i ,$ & $(1,5,2,6)(3,8,4,7)\sigma ^i ,$ & $(1,7,2,8)(3,6,4,5)\sigma ^i .$
 \end{tabular} 
\end{center}
(Here $i$ can take any value between $1$ and $6$, and an element is either on the list or its inverse is on the list.)

Now the elements of each column generate an abelian group and these $3$ groups cover $\Omega$. On the other hand, fixing $i$ and taking the elements of any row we get a non-commuting set of size $3$, as required. 

{\bf Class $\cycletype{6-6-3}$}: Reasoning along the lines of Corollary~\ref{2k+2k and l}, we can deduce that $\Gamma(\Lambda)$ is disconnected in this situation, with a maximal connected component lying inside a subgroup of $S_n$ isomorphic to $(S_3\wr K_4) \times \Z/3\Z \times \Z/3\Z$. Further a maximal connected component contains $432$ elements, with one example as follows:
\begin{tiny}
\begin{center}
\begin{tabular}{l l l}
$( 1,10, 2,11, 3,12)( 4, 8, 5, 9, 6, 7)\sigma^i,$&$( 1, 8, 3, 7, 2, 9)( 4,11, 6,10, 5,12)\sigma^i,$&$( 1, 4, 2, 5, 3, 6)( 7,10, 8,11, 9,12)\sigma^i,$\\$( 1,10, 3,12, 2,11
 )( 4, 8, 6, 7, 5, 9)\sigma^i,$&$( 1, 7, 3, 9, 2, 8)( 4,11, 6,10, 5,12)\sigma^i,$&$( 1, 4, 3, 6, 2, 5)( 7,11, 9,10, 8,12)\sigma^i,$\\$( 1, 4, 3, 6, 2, 5)( 7,10, 9,12, 8,11
 )\sigma^i,$&$( 1, 7, 2, 8, 3, 9)( 4,10, 5,11, 6,12)\sigma^i,$&$( 1,11, 3,10, 2,12)( 4, 7, 6, 9, 5, 8)\sigma^i,$\\$( 1, 5, 3, 4, 2, 6)( 7,12, 9,11, 8,10)\sigma^i,$&$
( 1, 8, 3, 7, 2, 9)( 4,12, 6,11, 5,10)\sigma^i,$&$( 1,10, 2,11, 3,12)( 4, 9, 5, 7, 6, 8)\sigma^i,$\\$( 1, 4, 2, 5, 3, 6)( 7,12, 8,10, 9,11)\sigma^i,$&$( 1, 7, 3, 9, 2, 8)
( 4,12, 6,11, 5,10)\sigma^i,$&$( 1,10, 3,12, 2,11)( 4, 9, 6, 8, 5, 7)\sigma^i,$\\$( 1,11, 3,10, 2,12)( 4, 9, 6, 8, 5, 7)\sigma^i,$&$( 1, 7, 2, 8, 3, 9)( 4,12, 5,10, 6,11
 )\sigma^i,$&$( 1, 5, 3, 4, 2, 6)( 7,10, 9,12, 8,11)\sigma^i,$\\$( 1,11, 3,10, 2,12)( 4, 8, 6, 7, 5, 9)\sigma^i,$&$( 1, 7, 2, 8, 3, 9)( 4,11, 5,12, 6,10)\sigma^i,$&$
( 1, 4, 2, 5, 3, 6)( 7,11, 8,12, 9,10)\sigma^i,$\\$( 1,10, 3,12, 2,11)( 4, 7, 6, 9, 5, 8)\sigma^i,$&$( 1, 7, 3, 9, 2, 8)( 4,10, 6,12, 5,11)\sigma^i,$&$( 1, 5, 3, 4, 2, 6)
( 7,11, 9,10, 8,12)\sigma^i,$\\$( 1,10, 2,11, 3,12)( 4, 7, 5, 8, 6, 9)\sigma^i,$&$( 1, 8, 3, 7, 2, 9)( 4,10, 6,12, 5,11)\sigma^i,$&$( 1, 4, 3, 6, 2, 5)( 7,12, 9,11, 8,10
 )\sigma^i,$\\$( 1, 5, 3, 4, 2, 6)( 7,11, 8,12, 9,10)\sigma^i,$&$( 1, 7, 3, 8, 2, 9)( 4,12, 6,10, 5,11)\sigma^i,$&$( 1,11, 2,10, 3,12)( 4, 9, 5, 8, 6, 7)\sigma^i,$\\$
( 1, 5, 2, 6, 3, 4)( 7,11, 9,10, 8,12)\sigma^i,$&$( 1, 7, 2, 9, 3, 8)( 4,12, 5,11, 6,10)\sigma^i,$&$( 1,10, 2,12, 3,11)( 4, 9, 5, 8, 6, 7)\sigma^i,$\\$( 1, 4, 2, 5, 3, 6)
( 7,10, 9,12, 8,11)\sigma^i,$&$( 1, 7, 2, 9, 3, 8)( 4,10, 5,12, 6,11)\sigma^i,$&$( 1,10, 3,11, 2,12)( 4, 9, 6, 7, 5, 8)\sigma^i,$\\$( 1, 5, 3, 4, 2, 6)( 7,12, 8,10, 9,11
 )\sigma^i,$&$( 1, 8, 2, 7, 3, 9)( 4,11, 5,10, 6,12)\sigma^i,$&$( 1,11, 2,10, 3,12)( 4, 8, 5, 7, 6, 9)\sigma^i,$\\$( 1, 5, 2, 6, 3, 4)( 7,12, 9,11, 8,10)\sigma^i,$&$
( 1, 7, 3, 8, 2, 9)( 4,10, 6,11, 5,12)\sigma^i,$&$( 1,10, 2,12, 3,11)( 4, 8, 5, 7, 6, 9)\sigma^i,$\\$( 1, 4, 2, 5, 3, 6)( 7,12, 9,11, 8,10)\sigma^i,$&$( 1, 8, 2, 7, 3, 9)
( 4,10, 5,12, 6,11)\sigma^i,$&$( 1,10, 3,11, 2,12)( 4, 7, 6, 8, 5, 9)\sigma^i,$\\$( 1, 7, 3, 8, 2, 9)( 4,11, 6,12, 5,10)\sigma^i,$&$( 1, 4, 2, 5, 3, 6)( 7,11, 9,10, 8,12
 )\sigma^i,$&$( 1,10, 3,11, 2,12)( 4, 8, 6, 9, 5, 7)\sigma^i,$\\$( 1, 8, 2, 7, 3, 9)( 4,12, 5,11, 6,10)\sigma^i,$&$( 1, 5, 2, 6, 3, 4)( 7,10, 9,12, 8,11)\sigma^i,$&$
( 1,10, 2,12, 3,11)( 4, 7, 5, 9, 6, 8)\sigma^i,$\\$( 1, 7, 2, 9, 3, 8)( 4,11, 5,10, 6,12)\sigma^i,$&$( 1, 5, 3, 4, 2, 6)( 7,10, 8,11, 9,12)\sigma^i,$&$( 1,11, 2,10, 3,12)
( 4, 7, 5, 9, 6, 8)\sigma^i,$\\$( 1, 7, 2, 8, 3, 9)( 4,10, 6,12, 5,11)\sigma^i,$&$( 1,10, 3,11, 2,12)( 4, 8, 5, 7, 6, 9)\sigma^i,$&$( 1, 4, 2, 6, 3, 5)( 7,10, 8,12, 9,11
 )\sigma^i,$\\$( 1, 7, 3, 9, 2, 8)( 4,10, 5,11, 6,12)\sigma^i,$&$( 1,11, 3,12, 2,10)( 4, 8, 5, 7, 6, 9)\sigma^i,$&$( 1, 4, 3, 5, 2, 6)( 7,10, 9,11, 8,12)\sigma^i,$\\$
( 1, 8, 3, 7, 2, 9)( 4,11, 5,12, 6,10)\sigma^i,$&$( 1,11, 2,10, 3,12)( 4, 7, 6, 8, 5, 9)\sigma^i,$&$( 1, 4, 3, 5, 2, 6)( 7,12, 9,10, 8,11)\sigma^i,$\\$( 1, 7, 2, 8, 3, 9)
( 4,12, 6,11, 5,10)\sigma^i,$&$( 1,10, 3,11, 2,12)( 4, 9, 5, 8, 6, 7)\sigma^i,$&$( 1, 6, 3, 4, 2, 5)( 7,11, 9,12, 8,10)\sigma^i,$\\$( 1, 7, 3, 9, 2, 8)( 4,12, 5,10, 6,11
 )\sigma^i,$&$( 1,11, 3,12, 2,10)( 4, 9, 5, 8, 6, 7)\sigma^i,$&$( 1, 4, 2, 6, 3, 5)( 7,12, 8,11, 9,10)\sigma^i,$\\$( 1, 8, 3, 7, 2, 9)( 4,12, 5,10, 6,11)\sigma^i,$&$
( 1,11, 2,10, 3,12)( 4, 9, 6, 7, 5, 8)\sigma^i,$&$( 1, 6, 3, 4, 2, 5)( 7,12, 9,10, 8,11)\sigma^i,$\\$( 1, 8, 3, 7, 2, 9)( 4,10, 5,11, 6,12)\sigma^i,$&$( 1,11, 2,10, 3,12)
( 4, 8, 6, 9, 5, 7)\sigma^i,$&$( 1, 4, 2, 6, 3, 5)( 7,11, 8,10, 9,12)\sigma^i,$\\$( 1, 7, 3, 9, 2, 8)( 4,11, 5,12, 6,10)\sigma^i,$&$( 1,11, 3,12, 2,10)( 4, 7, 5, 9, 6, 8
 )\sigma^i,$&$( 1, 6, 3, 4, 2, 5)( 7,10, 9,11, 8,12)\sigma^i,$\\$( 1, 7, 2, 8, 3, 9)( 4,11, 6,10, 5,12)\sigma^i,$&$( 1,10, 3,11, 2,12)( 4, 7, 5, 9, 6, 8)\sigma^i,$&$
( 1, 4, 3, 5, 2, 6)( 7,11, 9,12, 8,10)\sigma^i,$\\$( 1, 7, 2, 9, 3, 8)( 4,10, 6,11, 5,12)\sigma^i,$&$( 1,10, 3,12, 2,11)( 4, 9, 5, 7, 6, 8)\sigma^i,$&$( 1, 4, 2, 6, 3, 5)
( 7,10, 9,11, 8,12)\sigma^i,$\\$( 1,12, 3,11, 2,10)( 4, 9, 5, 7, 6, 8)\sigma^i,$&$( 1, 7, 3, 8, 2, 9)( 4,10, 5,12, 6,11)\sigma^i,$&$( 1, 4, 3, 5, 2, 6)( 7,10, 8,12, 9,11
 )\sigma^i,$\\$( 1, 7, 3, 8, 2, 9)( 4,11, 5,10, 6,12)\sigma^i,$&$( 1,12, 2,10, 3,11)( 4, 9, 6, 8, 5, 7)\sigma^i,$&$( 1, 6, 3, 4, 2, 5)( 7,12, 8,11, 9,10)\sigma^i,$\\$
( 1,10, 3,12, 2,11)( 4, 7, 5, 8, 6, 9)\sigma^i,$&$( 1, 9, 3, 7, 2, 8)( 4,12, 5,11, 6,10)\sigma^i,$&$( 1, 4, 2, 6, 3, 5)( 7,11, 9,12, 8,10)\sigma^i,$\\$( 1, 7, 2, 9, 3, 8)
( 4,11, 6,12, 5,10)\sigma^i,$&$( 1,12, 3,11, 2,10)( 4, 7, 5, 8, 6, 9)\sigma^i,$&$( 1, 4, 3, 5, 2, 6)( 7,11, 8,10, 9,12)\sigma^i,$\\$( 1, 9, 3, 7, 2, 8)( 4,11, 5,10, 6,12
 )\sigma^i,$&$( 1,12, 2,10, 3,11)( 4, 8, 6, 7, 5, 9)\sigma^i,$&$( 1, 6, 3, 4, 2, 5)( 7,11, 8,10, 9,12)\sigma^i,$\\$( 1,12, 2,10, 3,11)( 4, 7, 6, 9, 5, 8)\sigma^i,$&$
( 1, 7, 2, 9, 3, 8)( 4,12, 6,10, 5,11)\sigma^i,$&$( 1, 6, 3, 4, 2, 5)( 7,10, 8,12, 9,11)\sigma^i,$\\$( 1, 9, 3, 7, 2, 8)( 4,10, 5,12, 6,11)\sigma^i,$&$( 1,12, 3,11, 2,10)
( 4, 8, 5, 9, 6, 7)\sigma^i,$&$( 1, 4, 3, 5, 2, 6)( 7,12, 8,11, 9,10)\sigma^i,$\\$( 1, 7, 3, 8, 2, 9)( 4,12, 5,11, 6,10)\sigma^i,$&$( 1,10, 3,12, 2,11)( 4, 8, 5, 9, 6, 7
 )\sigma^i,$&$( 1, 4, 2, 6, 3, 5)( 7,12, 9,10, 8,11)\sigma^i,$ 
  \end{tabular} 
\end{center}
\end{tiny}
(Here $i$ can take any value between $1$ and $6$, and an element is either on the list or its inverse is on the list.)

In the list above the elements of every row generate an abelian group and so these $36$ groups cover the connected component. On the other hand, fixing $i$ and taking the elements of the first column we get a non-commuting set of size $36$, as required.

{\bf Class $\cycletype{6-4-2-2}$}: a maximal connected component in $\Gamma(\Lambda)$ contains $72$ pemutations. One example is as follows:

\begin{center}
\begin{tabular}{l l l}
$(1,2,3,4)(5,8)(6,7)\sigma^i,$&$(1,2,3,4)(5,6)(7,8)\sigma^i,$&$(1,2,3,4)(5,7)(6,8)\sigma^i,$\\$(1,2)(3,4)(5,6,7,8)\sigma^i,$&$(1,3)(2,4)(5,6,7,8)\sigma^i,$&$(1,4)
(2,3)(5,6,7,8)\sigma^i,$\\$(1,2,4,3)(5,6)(7,8)\sigma^i,$&$(1,2,4,3)(5,7)(6,8)\sigma^i,$&$(1,2,4,3)(5,8)(6,7)\sigma^i,$\\$(1,3)(2,4)(5,6,8,7)\sigma^i,$&$(1,4)(2,3)
(5,6,8,7)\sigma^i,$&$(1,2)(3,4)(5,6,8,7)\sigma^i,$\\$(1,3,2,4)(5,7)(6,8)\sigma^i,$&$(1,3,2,4)(5,8)(6,7)\sigma^i,$&$(1,3,2,4)(5,6)(7,8)\sigma^i,$\\$(1,4)(2,3)
(5,7,6,8)\sigma^i,$&$(1,2)(3,4)(5,7,6,8)\sigma^i,$&$(1,3)(2,4)(5,7,6,8)\sigma^i,$
\end{tabular} 
\end{center}
(Here $\sigma=(9,10,11,12,13,14,15)$, $i$ can be either $1$ or $5$, and an element is either on the list or its inverse is on the list.)

In the list above the elements of every row generate an abelian group and so these $3$ groups cover the connected component. On the other hand, fixing $i$ and taking the elements of the first column we get a non-commuting set of size $6$, as required.

{\bf Class $\cycletype{6-3-3-2}$}:  a maximal connected component in $\Gamma(\Lambda)$ contains $96$ pemutations. One example is as follows:
\begin{tiny}
\begin{center}
\begin{tabular}{l l l l}
$( 1, 2, 5, 6, 3, 4)( 7, 8, 9)(10,12,11),$&$( 1, 2, 5, 6, 3, 4)( 7, 8, 9)(10,12,11),$&$( 1, 5, 3)( 2, 6, 4)( 7,12, 8,11, 9,10),$&$( 1, 2, 5, 6, 3, 4)
( 7, 9, 8)(10,11,12),$\\$( 1, 3, 5)( 2, 4, 6)( 7,11, 9,12, 8,10),$&$( 1, 4, 5, 2, 3, 6)( 7, 8, 9)(10,12,11),$&$( 1, 5, 3)( 2, 6, 4)( 7,11, 9,12, 8,10
 ),$&$( 1, 4, 5, 2, 3, 6)( 7, 9, 8)(10,11,12),$\\$( 1, 3, 5)( 2, 4, 6)( 7,11, 8,10, 9,12),$&$( 1, 2, 3, 4, 5, 6)( 7, 8, 9)(10,12,11),$&$
( 1, 5, 3)( 2, 6, 4)( 7,11, 8,10, 9,12),$&$( 1, 2, 3, 4, 5, 6)( 7, 9, 8)(10,11,12),$\\$( 1, 4, 3, 2, 5, 6)( 7, 8, 9)(10,12,11),$&$( 1, 3, 5)
( 2, 6, 4)( 7,11, 8,10, 9,12),$&$( 1, 4, 3, 2, 5, 6)( 7, 9, 8)(10,11,12),$&$( 1, 5, 3)( 2, 4, 6)( 7,11, 8,10, 9,12),$\\$( 1, 2, 3, 6, 5, 4)( 7, 8, 9)
(10,12,11),$&$( 1, 3, 5)( 2, 6, 4)( 7,11, 9,12, 8,10),$&$( 1, 2, 3, 6, 5, 4)( 7, 9, 8)(10,11,12),$&$( 1, 5, 3)( 2, 4, 6)( 7,11, 9,12, 8,10
 ),$\\$( 1, 2, 5, 4, 3, 6)( 7, 8, 9)(10,12,11),$&$( 1, 3, 5)( 2, 6, 4)( 7,12, 8,11, 9,10),$&$( 1, 2, 5, 4, 3, 6)( 7, 9, 8)(10,11,12),$&$
( 1, 5, 3)( 2, 4, 6)( 7,12, 8,11, 9,10),$\\$( 1, 3, 5)( 2, 6, 4)( 7,10, 9,12, 8,11),$&$( 1, 4, 3, 2, 5, 6)( 7, 8, 9)(10,11,12),$&$( 1, 5, 3)
( 2, 4, 6)( 7,10, 9,12, 8,11),$&$( 1, 4, 3, 2, 5, 6)( 7, 9, 8)(10,12,11),$\\$( 1, 3, 5)( 2, 6, 4)( 7,12, 9,11, 8,10),$&$( 1, 2, 5, 4, 3, 6)( 7, 8, 9)
(10,11,12),$&$( 1, 5, 3)( 2, 4, 6)( 7,12, 9,11, 8,10),$&$( 1, 2, 5, 4, 3, 6)( 7, 9, 8)(10,12,11),$\\$( 1, 3, 5)( 2, 6, 4)( 7,12, 8,10, 9,11
 ),$&$( 1, 2, 3, 6, 5, 4)( 7, 8, 9)(10,11,12),$&$( 1, 5, 3)( 2, 4, 6)( 7,12, 8,10, 9,11),$&$( 1, 2, 3, 6, 5, 4)( 7, 9, 8)(10,12,11),$\\$
( 1, 2, 5, 6, 3, 4)( 7, 8, 9)(10,11,12),$&$( 1, 3, 5)( 2, 4, 6)( 7,10, 9,12, 8,11),$&$( 1, 2, 5, 6, 3, 4)( 7, 9, 8)(10,12,11),$&$( 1, 5, 3)( 2, 6, 4)
( 7,10, 9,12, 8,11),$\\$( 1, 4, 5, 2, 3, 6)( 7, 8, 9)(10,11,12),$&$( 1, 3, 5)( 2, 4, 6)( 7,12, 8,10, 9,11),$&$( 1, 4, 5, 2, 3, 6)( 7, 9, 8)
(10,12,11),$&$( 1, 5, 3)( 2, 6, 4)( 7,12, 8,10, 9,11),$\\$( 1, 2, 3, 4, 5, 6)( 7, 8, 9)(10,11,12),$&$( 1, 3, 5)( 2, 4, 6)( 7,12, 9,11, 8,10
 ),$&$( 1, 2, 3, 4, 5, 6)( 7, 9, 8)(10,12,11),$&$( 1, 5, 3)( 2, 6, 4)( 7,12, 9,11, 8,10).$
\end{tabular} 
\end{center}
\end{tiny}
(Here, to save space, we have omitted the cycle $(13,14)$ from every permutation. As always every element is either on the list or its inverse is on the list.)

In the list above the elements of every row generate an abelian group and so these $12$ groups cover the connected component. On the other hand, taking the elements of the first column we get a non-commuting set of size $12$, as required.
\end{proof}

Theorem~\ref{t: alternating} now follows from Propositions~\ref{p: alt even} and \ref{p: alt odd}, Corollary~\ref{c: alt less 11} and Lemma~\ref{l: alt 15}.

\section{Further work}

Note that both Theorems~\ref{t: alternating} and \ref{t: brown} have numbers for which information is not given. In particular, the following groups are not covered: $S_{10}, S_{12}, S_{14}, A_{13}, A_{14}, A_{17}$ and $A_{19}$.
 
Results from this paper can be used to study these seven groups: for instance, one can check that $Y_b(S_{10})$ consists of the single conjugacy class $\cycletype{4-2-2}$. Thus Lemma~\ref{l: leftover} and a consideration of the abelian groups that contain elements from this class imply that $\delta(S_{10})=\Delta(S_{10})$ if and only if one can find $9450$ noncommuting permutations in $S_{10}$ of cycle type $\cycletype{4-2-2}$.

In \cite{brown1, brown2}, Brown studies the asymptotics of the sequence $\Delta(S_n)/\delta(S_n)$ and, in particular, shows that this sequence has no limit and takes on infinitely many distinct values arbitrarily close to $1$. It seems reasonable to think that the same is true of the sequence $\Delta(A_n)/\delta(A_n)$, but this has not yet been established.

Graphs analogous to the commuting graph have been studied in various contexts. In particular, in \cite{abg}, a graph $\Gamma_c(G)$ is defined for any finite group $G$ and any $c\in \Z^+\cup\{\infty\}$ as follows: vertices are the elements of $G$, with two vertices $a,b\in G$ joined by an edge if and only if $\langle a,b\rangle$ is nilpotent of class at most $c$. Observe that the commuting graph is simply the graph $\Gamma_1(G)$. 

A natural extension to the work in the current paper would be to establish whether or not $\delta(\Gamma_c(G))=\Delta(\Gamma_c(G))$ for $G$ alternating or symmetric, and $c\neq 1$.

\providecommand{\bysame}{\leavevmode\hbox to3em{\hrulefill}\thinspace}
\providecommand{\MR}{\relax\ifhmode\unskip\space\fi MR }
\providecommand{\MRhref}[2]{%
  \href{http://www.ams.org/mathscinet-getitem?mr=#1}{#2}
}
\providecommand{\href}[2]{#2}

\end{document}